\documentclass[final]{elsarticle} %review,preprint, final
\usepackage{etex}
\usepackage[colorlinks=true]{hyperref}
 \usepackage{setspace} 
\onehalfspacing
\usepackage{epsfig,amssymb,latexsym,bm}
\usepackage{amsfonts,psfrag,amsmath,mathrsfs,amsthm,color,empheq}
\usepackage{graphicx}
\usepackage{multirow,multicol}
\usepackage{fancybox}
\usepackage{color}
\usepackage{xcolor}
\usepackage{bbding}
\usepackage{pifont}
\usepackage{amsmath}% http://ctan.org/pkg/amsmath
% Tag equation with name
\usepackage[titletoc,title]{appendix}
\usepackage{wasysym}

\DeclareMathOperator{\diag}{diag}

 \usepackage{epsfig,soul}
%\usepackage{epstopdf}
%\epstopdfsetup{suffix=} %add this to makesure generated pdf has the same name
\usepackage{textcomp}
\usepackage{multirow}
\usepackage{enumerate}
\usepackage{caption} 
\usepackage{booktabs}
\usepackage{float}
\usepackage{tikz} % for plot 
\usetikzlibrary{decorations.pathreplacing}
%\modulolinenumbers[5] 
%\usepackage{times}
\journal{Journal of Computational and Applied Mathematics}

\newtheorem{theorem}{Theorem}[section]
\newtheorem{lemma}{Lemma}[section]
\newtheorem{remark}{Remark}[section]

\newcommand{\T}{{\intercal}}
\newcommand{\CH}{{\mathcal{H}}}

\newcommand\figcaption{\def\@captype{figure}\caption}
\newcommand\tabcaption{\def\@captype{table}\caption}

\graphicspath{{./Figures/}}
\usepackage[margin=1in]{geometry}
%%% Liu Jun
%\usepackage{algorithm} 
%\usepackage{algpseudocode}
%\makeatletter
%\def\BState{\State\hskip-\ALG@thistlm}
%\makeatother
%\renewcommand{\algorithmicrequire}{\textbf{Input:}}
%\renewcommand{\algorithmicensure}{\textbf{Output:}}
 
\newcommand{\tn}[1]{\ \textnormal{#1}\ } 
\newcommand{\bmt}{\left[ \begin{array}{ccccccccccccccccccccccccccccccccccccc}}
\newcommand{\emt}{\end{array}\right]}
\newcommand{\bean}{\begin{eqnarray*}}
\newcommand{\eean}{\end{eqnarray*}}
\newcommand{\bea}{\begin{eqnarray}}
\newcommand{\eea}{\end{eqnarray}}
\newcommand{\eq}{\begin{equation}\begin{array}{lllllllll}}
\newcommand{\ee}{\end{array}\end{equation}}
\newcommand{\eqn}{\begin{equation*}\begin{array}{lllllllll}}
\newcommand{\een}{\end{array}\end{equation*}}

%\def\T{\Theta}

%%%%%%%%%%%%%%%%%%%%%%%
%% Elsevier bibliography styles
%%%%%%%%%%%%%%%%%%%%%%%
%% To change the style, put a % in front of the second line of the current style and
%% remove the % from the second line of the style you would like to use.
%%%%%%%%%%%%%%%%%%%%%%%

%% Numbered
%\bibliographystyle{model1-num-names}

%% Numbered without titles
%\bibliographystyle{model1a-num-names}

%% Harvard
%\bibliographystyle{model2-names.bst}\biboptions{authoryear}

%% Vancouver numbered
%\usepackage{numcompress}\bibliographystyle{model3-num-names}

%% Vancouver name/year
%\usepackage{numcompress}\bibliographystyle{model4-names}\biboptions{authoryear}

%% APA style
%\bibliographystyle{model5-names}\biboptions{authoryear}

%% AMA style
%\usepackage{numcompress}\bibliographystyle{model6-num-names}

%% `Elsevier LaTeX' style
\bibliographystyle{elsarticle-num}
%%%%%%%%%%%%%%%%%%%%%%%

\begin{document}

\begin{frontmatter}

\title{Non-commutative Discretize-then-Optimize Algorithms for \\Elliptic PDE-constrained Optimal Control Problems \tnoteref{t1}}
\tnotetext[t1]{
Jun Liu's research was supported in part by a Seed Grants for Exploratory and Transitional Research (STEP) Award from the Graduate School at SIUE.
Zhu Wang's research was supported in part by the grant NSF-DMS 1522672 of the United States.}

%% Group authors per affiliation:
\author[Liu]{Jun Liu\corref{mycorrespondingauthor}}
\ead{juliu@siue.edu}
\author[Wang]{Zhu Wang}
 \ead{wangzhu@math.sc.edu}
%\cortext[cor1]{Corresponding author}
%% or include affiliations in footnotes:
\address[Liu]{Department of Mathematics and Statistics, Southern Illinois University Edwardsville, Edwardsville, IL 62026, USA.}
\address[Wang]{Department of Mathematics, University of South Carolina,
Columbia, SC 29208, USA.}
\cortext[mycorrespondingauthor]{Corresponding author}

\begin{abstract}
In this paper, we analyze the convergence of several optimize-then-discretize and discretize-then-optimize algorithms, based on either a second-order or a fourth-order finite difference discretization,
for solving elliptic PDE-constrained optimal control problems.
To ensure the convergence of a discretize-then-optimize algorithm, one well-accepted criterion is to design the discretization scheme
such that the resulting discretize-then-optimize algorithm commutes with the corresponding optimize-then-discretize algorithm. In other words, both algorithms should give rise to exactly the same discrete optimality system. 
However, such a restrictive criterion is not trivial to fulfill. 
By investigating a distributed control problem governed by an elliptic equation, we first show that enforcing such a stringent condition of commutative property is only sufficient but not necessary for achieving the desired convergence. 
We then introduce some suitable $H_1$ semi-norm penalty/regularization terms to recover the lost convergence due to the inconsistency caused by the loss of commutativity.  
Numerical experiments are carried out to verify our theoretical analysis and also validate the effectiveness of our proposed regularization techniques.
\end{abstract}

\begin{keyword}
PDE-constrained optimization
\sep elliptic optimal control
\sep discretize-then-optimize 
\sep optimize-then-discretize
\sep finite difference method
\sep trapezoidal rule
\sep Simpson's rule
\sep regularization
\end{keyword}

\end{frontmatter}

%\linenumbers

%\input{introduction}
%=============================================================
\section{Introduction}\label{s_introduction}
In the past few decades, PDE-constrained optimization and optimal control problems \cite{Lions1971,Hinze2009,Troltzsch2010,De_los_Reyes_2015}
have gained many efforts from the scientific computing community, due to the increasingly broad applications and tremendous computational challenges.
Generally speaking, 
there are two different pathways of constructing feasible numerical algorithms: (i) optimize-then-discretize (OD) approach and (ii)
discretize-then-optimize (DO) approach \cite{gunzburger2003perspectives,Herzog_2010,Hinze_2011}. 
In OD approach, one first derives the first-order necessary continuous optimality conditions analytically, and then
discretizes the continuous optimality system with appropriate discretization schemes, such as  finite difference method \cite{strikwerda2007finite,LeVeque2007} and finite element method \cite{hackbusch1992elliptic,Johnson2009}. 
After that, the resulting discretized linear/nonlinear systems can be solved by various well-designed efficient iterative solvers \cite{Saad2003,Briggs2000,Trottenberg2001,Borzi2009,hackbusch2013multi}.
While in DO approach, one first discretizes the original continuous problem directly to 
obtain a fully discretized finite-dimensional optimization problem, 
which is then solved by any existing numerical optimization algorithms \cite{Betts2001,Nocedal2006} that can handle a large number of decision variables. 
Intuitively, the DO approach is more straightforward in practice since the optimization processes
can be automatically operated by using black-box optimization solvers \cite{Betts2001,Betts2005,GilMS05,snopt76}. 
Moreover, the DO approach provides more flexibility in handling additional complex constraints, bounds, and regularization terms \cite{Kameswaran2008}.

Although the OD approach has been widely used in practice with very satisfactory outcomes, it
indeed has at least the following two recognized drawbacks \cite{gunzburger2003perspectives}.
First, the continuous KKT PDE system (\ref{KKT}) may be difficult to derive for more general constraints,
 such as complicated nonlinear state gradient constraints. This difficulty may limit its range of applications.
Second, the discretized KKT linear system  (\ref{KKT-h}) may become non-symmetric for more general discretization schemes, 
 such as a standard 9-point high-order finite difference scheme (as shown later). Without symmetry, the system (\ref{KKT-h})
 will not correspond to the optimality system of any discrete optimization problem, which hence yields inconsistent gradients of the objective functional
 that lead to troubles in using gradient-based optimization algorithms for solving nonlinear problems.
To overcome the above disadvantages of the OD approach, one may resort to the DO approach,
which ensures the generation of a symmetric discretized KKT linear system that is easier to be solved computationally.
However, one major disadvantage of the DO approach is that its control approximations
may be spuriously oscillating. 
% and hence is not convergent to the practically meaningful optimal control of the optimization problem.
Therefore, in this paper, we attempt to systematically address this problem  through
developing new regularization techniques. 
The proposed regularization methods guarantee the convergence of non-commutative DO discretization schemes, which are especially attractive to those applications that can not be treated easily by the OD approach 
and often demands advanced functional analysis skills from the practitioners.

It is worthwhile to mention that such spurious numerical instabilities have been observed, but not thoroughly resolved, across various disciplines and application scenarios.
In 1979, Huntley \cite{Huntley1979} observed ``spurious oscillations in the optimal control given by the Riccati algorithm 
are shown to stem from the use of Simpson's rule to approximate the integrals'' 
from their numerical results without giving further mathematical justification. 
Similar checkerboards numerical instabilities were thoroughly discussed and investigated in topology optimization \cite{Jog1996,Sigmund1998}.
In optimal control of ODEs, similar numerical oscillations were noticed due to inappropriate discretization
within the context of the pseudo-spectral (PS) methods \cite{Fahroo2008,Garg_2010} as well as Runge-Kutta methods \cite{Hager_2000,DontchevHagerVeliov00}.
As for optimal control of PDEs, similar numerical oscillations has been carefully investigated in \cite{Heinkenschloss2010,Leykekhman2012}
when applying discontinuous Galerkin methods to advection-diffusion PDE constrained optimal control problems
with mild interior and boundary layers.
However, there is no very satisfactory solution to this challenging problem in general.
A currently well-accepted strategy is to redesign the discretization scheme (such as adding stabilization terms \cite{Becker2007}) 
such that discretization and optimization commute in the sense that  OD and DO approaches are essentially the same \cite{Apel2012}.
However, such a strategy has a very limited applicability in dealing with high-order discretizations, adaptive meshes as well as more complicated constraints,
since it becomes impractical to adjust the corresponding discretization scheme in such a dedicated manner.
%A large percentage of the currently available discretization schemes developed in the context of numerical solutions for PDEs does not have such a stringent commutative property.

On the other hand, for better efficiency and more accuracy in solving PDEs, significant progress has been made in developing
high-order (compact) finite difference discretization schemes \cite{Lele_1992,Shu_2003,Shu_2009}. 
One would agree that PDE-constrained optimization problems can benefit much more 
from a successful high-order discretization scheme, since it dramatically reduces the number of decision variables to optimize
and hence the overall computational cost.
Nevertheless, high-order discretization schemes have been less frequently discussed or utilized 
in the emerging field of PDE-constrained optimization, possibly due to low regularity of the solution with active control/state constraints.
Bearing this regularity limitation in mind, we believe high-order discretization schemes are still very desirable because there are many applications turn out to be of high regularity or
having certain singularities but can be treated with suitable high-order schemes and numerical techniques.
For example, Wachsmuth and Wurst \cite{Wachsmuth2016} recently applied $hp$ finite element method to elliptic boundary control problems
with point-wise control constraints and established its exponential order of accuracy with special mesh refinement strategies.
R\"{o}sch and Wachsmuth \cite{R_sch_2017} developed a higher-order finite element discretization based on a new mass lumping strategy for a control constrained elliptic optimal control
problem, which achieves up to fourth-order accuracy on locally refined meshes.
In the framework of OD approach, the first high-order finite difference discretization scheme for solving elliptic optimal control problem was proposed by Borz\`{i} \cite{Borzi2007},
who applied the extended nine-point approximation to the Laplacian that achieves a formal fourth-order accuracy in the case of no control constraints.
As mentioned in \cite{Borzi2007} without proof, that finite difference scheme, indeed, numerically attains a fourth-order accuracy even in the presence of active control constraints.

In this work, we investigate the convergence properties of the DO algorithms with either a second-order or a fourth-order finite difference discretization scheme
in alignment with the corresponding OD algorithms, and further propose an innovative regularization technique to ensure the anticipated convergence of the discussed DO algorithms
(see Tables \ref{T1} and \ref{T2} for a quick overview). 
To the best of our knowledge, 
fourth-order finite difference discretization schemes within the framework of DO approach have not been discussed in the literature, 
which makes the convergence analysis provided in this paper a major contribution of this work. 
{Considering the convergence properties of finite difference discretization schemes have been rarely discussed  \cite{Borzi2005} 
in the field of numerical optimal control of PDEs, our convergence analysis is also of independent interests to the larger PDE constrained optimal control community.}

The rest of this paper is organized as follows. 
In Section \ref{ODsec}, we present two OD algorithms based second-order and fourth-order finite difference schemes, respectively, for solving a 2D prototype elliptic optimal control problem.
In Section \ref{s_fdm}, we describe and analyze several different DO algorithms for solving the prototype elliptic optimal control problem,
with a second-order or fourth-order finite difference discretization for the state equation, and the trapezoidal and Simpson's rule for the objective functional, respectively.
A simple 2D numerical example demonstrates the unexpected convergence failure when using Simpson's rule.
In Section \ref{s_reg1}, we propose to resolve the convergence issue of Simpson's rule by adding $H_1$ semi-norm regularization terms to the discrete objective functional,
which is inspired by a critical comparison of the discrete KKT system arising from OD and DO approaches.
In Section \ref{s_num}, a few numerical experiments are conducted to validate our theoretical conclusions and illustrate the effectiveness of our proposed algorithms.
Finally, some {concluding} remarks are drawn in Section \ref{s_end}.

\section{Optimize-then-Discretize algorithms with finite difference discretizations}
\label{ODsec}
As a motivating example, we consider the following 2D elliptic optimal control problem \cite{Lions1971,Hinze2009,Troltzsch2010}
of 
\begin{align}\label{EllipticObj2}
 \min_{u\in U}\quad J(z,u)&=\frac{1}{2}\int_{\Omega}(z-g)^2dx+\frac{\alpha}{2} \int_{\Omega}u^2dx,
\end{align}
subject to
 \eq \label{EllipticState2} 
 \left\{
\begin{aligned} 
-\Delta z&=u+f\tn{in} \Omega:=(0,1)^2, \\
\quad z&=0\tn{on} \partial\Omega,
\end{aligned}  \right.
\ee
where the source term $f\in L^2(\Omega)$, desired state $g\in L^2(\Omega)$, $\alpha>0$ is  a regularization parameter, and $\Delta$ is the Laplacian operator. 
%\blue{However, the proposed method also works for problems in higher dimensions.}
If $U=L^2(\Omega)$, the above optimal control problem admits a unique optimal solution \cite{Lions1971},
which satisfies the following first-order necessary optimality KKT system
 \eq \label{KKT} 
\mbox{(KKT)}\quad \left\{
\begin{aligned} 
-\Delta z-u&= f\quad \hbox{in}\; \Omega,\quad  z=0\quad  \hbox{on}\; \partial \Omega,\\
-\Delta p+z&= g\quad \hbox{in}\; \Omega,\quad  p=0\quad  \hbox{on}\; \partial \Omega,\\
\alpha u-p &=0\quad \hbox{in}\; \Omega,
\end{aligned} \right. 
\ee
where $p$ is called adjoint state.
Due to the strict convexity of (\ref{EllipticObj2}-\ref{EllipticState2}), the above optimality KKT system (\ref{KKT}) is also sufficient.
In other words, the solution to the KKT system (\ref{KKT}) gives the unique optimal solution to (\ref{EllipticObj2}-\ref{EllipticState2}).
Notice that the derivation of the KKT system (\ref{KKT}) using either calculus of variation or Lagrange functional techniques
corresponds to the `optimize' step in the OD approach, while the `discretize' step is to discretize the KKT system (\ref{KKT}) with an appropriate discretization.
Such an OD approach will lead to a large-scale sparse linear system that often requires efficient iterative solvers,
and its resulting discrete approximate solutions are expected to converge to the optimal solutions under certain regularity assumptions.

Given a step size $h=1/N$, the 2D space domain $\overline\Omega=[0,1]^2$ is partitioned uniformly by grid points 
$(x_i, y_j)$ with $0\leq i, j \leq N$, where $x_i=ih$ and $y_j=jh$.  
Denote the set of all interior grid points of the partitioned space domain by $\Omega_h$. 
In OD approach, upon discretizing the Laplacian with a second-order 5-point central finite difference scheme \cite{Borzi2002,Borzi2005}, we obtain
the following fully discretized KKT linear system
\eq \label{KKT-h} 
 \mbox{(OD-2)}\quad\left\{
\begin{aligned} 
-\Delta_h z_h-u_h&= f_h,\\
-\Delta_h p_h+z_h&= g_h,\\
\alpha u_h-p_h &=0,
\end{aligned} \right.
\ee
where $\Delta_h$ denotes the discrete Laplacian matrix corresponding to the stencil  
$$h^{-2} \bmt 0 &1 &0 \\1 & -4& 1\\0 & 1 & 0 \emt$$ 
and $z_h$, $u_h$, $p_h$, $f_h$, $g_h$ are the vectorized discrete approximation 
of the associated functions on the grid points in $\Omega_h$.
Here we assume the homogeneous Dirichlet boundary conditions are already enforced through the discrete Laplacian matrix \cite{LeVeque2007}.
As another major contribution of this paper, we are also particularly interested in studying a fourth-order discretization scheme and 
the corresponding  discretized KKT system.
In OD approach, by discretizing the KKT system (\ref{KKT}) with a fourth-order compact 9-point central finite difference scheme \cite{hackbusch1992elliptic}, 
we achieve the following discretized KKT linear system
\eq \label{KKT-h-4} 
 \mbox{(OD-4)}\quad\left\{
\begin{aligned} 
F_h z_h-R_h u_h&= R_h f_h,\\
F_h p_h+R_h z_h&= R_hg_h,\\
\alpha u_h-p_h &=0,
\end{aligned} \right.
\ee
where the stencil
\[
 F_h=\frac{1}{6h^2}\bmt -1 &-4 &-1 \\-4 & 20& -4\\-1 & -4 &-1 \emt \quad \mbox{and}\quad 
 R_h= \frac{1}{12}\bmt 0 &1 &0 \\1 & 8& 1\\0 & 1 & 0 \emt
\]
denotes the discrete negative Laplacian matrix and the weights of averaging the right-hand-side, respectively.

For any two grid functions $v$ and $w$ defined on $\Omega_h$, 
we define the discrete weighted Euclidean inner product 
$$(v,w)=h^2 \sum_{i=1}^{N-1}\sum_{j=1}^{N-1}v_{ij}w_{ij}.$$ 
Based on this inner product, we can define the corresponding induced discrete $l^2$ vector norm 
$$\|v\|=\sqrt{(v,v)}=\left(h^2\sum_{i=1}^{N-1}\sum_{j=1}^{N-1}v_{ij}^2\right)^{1/2}.$$
We also define the standard infinity vector norm 
$$\|v\|_\infty=\max_{1\le i,j\le N-1} |v_{ij}|.$$
We first present the following lemma that will be used in proving our convergence results.
Both the following inequalities are the special cases of the discrete Sobolev embedding inequalities \cite{Bramble_1966,brenner2007mathematical},
but their usage within the context of finite difference discretizations are not widely available in the literature. 
Hence, for completeness, we restate the main conclusions adapted from the classic book \cite[page 281]{Samarskii2001}.
\begin{lemma}\label{Lem1}
For any grid function $v$ defined on $\Omega_h$ that vanishes on the boundary nodes ($i\in \{0,N\}$ or $j\in\{0,N\}$), 
there exists a generic positive constant $C$, independent of $h$, such that there hold
\begin{enumerate}
 \item[(i)] $$\|v \|_\infty\le C \|\Delta_h v\|,$$
 \item[(ii)] $$\|v \|_\infty\le C \|F_h v\|.$$
\end{enumerate}
\end{lemma}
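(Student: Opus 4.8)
The plan is to diagonalize $-\Delta_h$ and $F_h$ simultaneously, which is possible because on the uniform grid with homogeneous Dirichlet data both operators share the orthogonal basis of discrete sine functions, and then to reduce each inequality to the summability of a lattice series that encodes the two-dimensionality of $\Omega$. First I would record the spectral data: for $1\le k,l\le N-1$ the grid functions $\phi^{(k,l)}_{ij}=\sin(k\pi x_i)\sin(l\pi y_j)$ are mutually orthogonal eigenvectors of $-\Delta_h$ with eigenvalues $\lambda_{k,l}=\frac{4}{h^2}\bigl(\sin^2\frac{k\pi h}{2}+\sin^2\frac{l\pi h}{2}\bigr)=\frac{1}{h^2}(\mu_k+\mu_l)$, where $\mu_k:=4\sin^2\frac{k\pi h}{2}\in(0,4)$; after rescaling (one checks $\|\phi^{(k,l)}\|=\frac12$) the functions $2\phi^{(k,l)}$ form an orthonormal basis of the space of grid functions vanishing on the boundary nodes. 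Writing $v=\sum_{k,l}c_{k,l}\,(2\phi^{(k,l)})$ then gives $\|v\|^2=\sum_{k,l}|c_{k,l}|^2$, $\|\Delta_h v\|^2=\sum_{k,l}\lambda_{k,l}^{2}|c_{k,l}|^2$, and, since $|\phi^{(k,l)}_{ij}|\le1$, the crude bound $\|v\|_\infty\le 2\sum_{k,l}|c_{k,l}|$.

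For part (i) I would split off $\lambda_{k,l}$ and apply the Cauchy--Schwarz inequality,
\[
\|v\|_\infty\le 2\sum_{k,l}\lambda_{k,l}^{-1}\bigl(\lambda_{k,l}|c_{k,l}|\bigr)
\le 2\Bigl(\sum_{k,l}\lambda_{k,l}^{-2}\Bigr)^{1/2}\Bigl(\sum_{k,l}\lambda_{k,l}^{2}|c_{k,l}|^2\Bigr)^{1/2}
=2\Bigl(\sum_{k,l}\lambda_{k,l}^{-2}\Bigr)^{1/2}\|\Delta_h v\|,
\]
so the statement reduces to the single $h$-uniform estimate $\sum_{k,l=1}^{N-1}\lambda_{k,l}^{-2}\le C$. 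This is the heart of the proof and the main obstacle. I would obtain it from the elementary inequality $\sin\theta\ge\frac{2}{\pi}\theta$ on $[0,\frac{\pi}{2}]$, applicable since $\frac{k\pi h}{2}<\frac{\pi}{2}$ for $k\le N-1$: it gives $\lambda_{k,l}\ge 4(k^2+l^2)$, hence $\sum_{k,l=1}^{N-1}\lambda_{k,l}^{-2}\le\frac{1}{16}\sum_{k,l\ge1}(k^2+l^2)^{-2}$, and the last double series converges (comparison with $\int\!\!\int_{r\ge1}r^{-3}\,dr\,d\theta$). This is exactly the discrete analogue of the Sobolev embedding $H^2(\Omega)\hookrightarrow L^\infty(\Omega)$, which holds because $\Omega\subset\R^2$.

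For part (ii) I would observe that the compact nine-point stencil splits as $F_h=\frac{1}{6h^2}(4A+B)$, where $A$ is the standard five-point negative-Laplacian stencil and $B$ the rotated one (center weight $4$, the four diagonal neighbours with weight $-1$); both are separable, so they are diagonalized by the same $\phi^{(k,l)}$, and a short computation gives the $F_h$-eigenvalue $\nu_{k,l}=\frac{1}{h^2}\bigl(\mu_k+\mu_l-\frac16\mu_k\mu_l\bigr)$. Since $0<\mu_k,\mu_l<4$ we have $\mu_k\mu_l\le 4\min(\mu_k,\mu_l)\le 2(\mu_k+\mu_l)$, hence $\nu_{k,l}\ge\frac23\lambda_{k,l}>0$; this immediately yields $\|\Delta_h v\|\le\frac32\|F_h v\|$ (again by expansion in the sine basis), so (ii) follows from (i). Alternatively one can rerun the Cauchy--Schwarz step of part (i) directly with $\nu_{k,l}$, using $\sum_{k,l}\nu_{k,l}^{-2}\le\frac94\sum_{k,l}\lambda_{k,l}^{-2}\le C$. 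Apart from the convergence of the lattice series, everything here is routine; the only points that need care are the eigenvalue bookkeeping for the compact stencil and the normalization constant of the discrete sine basis.
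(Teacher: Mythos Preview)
Your proof is correct. The spectral diagonalization is carried out accurately: the normalization $\|\phi^{(k,l)}\|=\tfrac12$, the eigenvalues $\lambda_{k,l}=h^{-2}(\mu_k+\mu_l)$ for $-\Delta_h$, and the compact nine-point eigenvalues $\nu_{k,l}=h^{-2}\bigl(\mu_k+\mu_l-\tfrac16\mu_k\mu_l\bigr)$ all check out, as does the key lower bound $\nu_{k,l}\ge\tfrac23\lambda_{k,l}$ from $\mu_k\mu_l\le 2(\mu_k+\mu_l)$. The reduction of (i) to the summability of $\sum_{k,l\ge1}(k^2+l^2)^{-2}$ via $\sin\theta\ge\tfrac{2}{\pi}\theta$ is the natural two-dimensional discrete Sobolev argument, and the deduction of (ii) from (i) by the eigenvalue comparison is clean.

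It is worth noting that the paper does \emph{not} actually prove this lemma: it is stated as an adaptation of results from Samarskii's book (page 281) and the discrete Sobolev embeddings of Bramble and of Brenner--Scott, with no argument given. Those references typically proceed via discrete Green's function estimates or energy inequalities rather than explicit diagonalization. Your spectral route is therefore a genuinely different, fully self-contained proof; its advantage is that part (ii) becomes almost free once the $F_h$-eigenvalues are written down, whereas a Green's-function argument would have to treat the nine-point operator separately. The trade-off is that your method is tied to the rectangular geometry and the uniform mesh (where the discrete sine basis is available), while the cited discrete Sobolev inequalities apply somewhat more broadly.
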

%  \begin{proof}
% See Appendix \ref{SobEmb} for the detailed proof.
% \end{proof}

Based on (\ref{KKT}) and (\ref{KKT-h}), one can prove the above scheme (\ref{KKT-h}) 
has a second-order accuracy under suitable regularity assumptions, as stated in the following theorem.
\begin{theorem}\label{Thm-KKT-h}
 {Suppose the exact solution triple of (\ref{KKT}) satisfies $\{z,u,p\}\subset C^4(\overline\Omega)$, }
 then the \textnormal{(OD-2)} finite difference scheme   (\ref{KKT-h}) has a second-order accuracy with respect to the infinity norm $\|\cdot\|_\infty$.
\end{theorem}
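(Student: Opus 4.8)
The plan is to combine a classical consistency (truncation error) estimate for the $5$-point stencil with a discrete stability argument tailored to the coupled KKT structure, and then to upgrade the resulting $l^2$ error bound to the desired $l^\infty$ bound by invoking Lemma \ref{Lem1}(i). The key structural observation is that, once the algebraic relation $\alpha u-p=0$ is used to eliminate the control, both the continuous KKT system (\ref{KKT}) and the discrete system (\ref{KKT-h}) reduce to a $2\times2$ self-adjoint elliptic coupling between state and adjoint, and this self-adjointness is what drives the stability estimate.

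First I would record the consistency. Since $\{z,p\}\subset C^4(\overline\Omega)$, a Taylor expansion of the $5$-point central difference gives, on the interior grid $\Omega_h$, $-\Delta_h z = -\Delta z + \tau_z$ and $-\Delta_h p = -\Delta p + \tau_p$ with $\|\tau_z\|_\infty,\|\tau_p\|_\infty\le Ch^2$ (hence also $\|\tau_z\|,\|\tau_p\|\le Ch^2$ in the discrete $l^2$ norm), where $C$ is independent of $h$. Because the (OD-2) scheme merely samples $f$ and $g$ at grid points, substituting the grid restriction of the exact triple $\{z,u,p\}$ into (\ref{KKT-h}) and subtracting (\ref{KKT-h}) itself yields, for the errors $e_z:=z-z_h$, $e_u:=u-u_h$, $e_p:=p-p_h$ on $\Omega_h$ (all of which vanish on the boundary nodes),
\begin{equation*}
-\Delta_h e_z - e_u = \tau_z,\qquad -\Delta_h e_p + e_z = \tau_p,\qquad \alpha e_u - e_p = 0 .
\end{equation*}
The third relation gives $e_u=e_p/\alpha$, so the system reduces to $-\alpha\Delta_h e_z - e_p=\alpha\tau_z$ and $-\Delta_h e_p + e_z=\tau_p$.

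Next I would carry out a discrete energy estimate exploiting the symmetry of $\Delta_h$: take the discrete inner product of the first reduced equation with $e_p$, of the second with $\alpha e_z$, and subtract; the cross terms $\alpha(\Delta_h e_z,e_p)$ and $\alpha(\Delta_h e_p,e_z)$ cancel by symmetry of $\Delta_h$, leaving $\|e_p\|^2+\alpha\|e_z\|^2=\alpha(\tau_p,e_z)-\alpha(\tau_z,e_p)$. Cauchy--Schwarz together with Young's inequality then absorbs the error terms into the left-hand side and yields $\|e_p\|^2+\alpha\|e_z\|^2\le \alpha\|\tau_p\|^2+\alpha^2\|\tau_z\|^2=O(h^4)$ with constants independent of $h$; hence $\|e_z\|,\|e_p\|,\|e_u\|=O(h^2)$ in the discrete $l^2$ norm. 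Finally, to reach the infinity norm I would go back to the error equations and bound $\|\Delta_h e_z\|\le \frac{1}{\alpha}\|e_p\|+\|\tau_z\|=O(h^2)$ and $\|\Delta_h e_p\|\le \|e_z\|+\|\tau_p\|=O(h^2)$, whence Lemma \ref{Lem1}(i) gives $\|e_z\|_\infty\le C\|\Delta_h e_z\|=O(h^2)$, $\|e_p\|_\infty\le C\|\Delta_h e_p\|=O(h^2)$, and $\|e_u\|_\infty=\|e_p\|_\infty/\alpha=O(h^2)$, which is the claimed second-order accuracy.

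The step I expect to be the main obstacle is the coupled energy estimate: one must choose the test-function weighting (multiplying the adjoint equation by $\alpha e_z$ and the state equation by $e_p$) precisely so that the indefinite state--adjoint coupling is handled \emph{exactly} through the symmetry of $\Delta_h$, rather than attempting to invert $\Delta_h$ directly, which would cost a factor $h^{-2}$ and destroy the order. Once the $l^2$ bound is secured, the passage to $l^\infty$ through Lemma \ref{Lem1} is routine. A minor but worthwhile point to make explicit is that $C^4(\overline\Omega)$ regularity is exactly what the $5$-point truncation analysis requires, and that $f_h,g_h$ introduce no additional error because the (OD-2) scheme samples $f$ and $g$ pointwise.
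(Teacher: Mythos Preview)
Your argument is correct and follows essentially the same strategy as the paper's proof: derive the error system, perform a discrete energy estimate that exploits the symmetry of $\Delta_h$ to cancel the indefinite state--adjoint cross terms, and then invoke Lemma~\ref{Lem1}(i) to pass from an $l^2$-type bound to the $l^\infty$ norm. The only minor difference is in the choice of test functions: the paper tests the state and adjoint error equations directly with $-\Delta_h e_z$ and $-\Delta_h e_p$ (and the algebraic equation with $-\Delta_h e_z$), obtaining $\alpha\|\Delta_h e_z\|^2+\|\Delta_h e_p\|^2\le Ch^4$ in a single pass and thus avoiding your intermediate $l^2$ bootstrap step.
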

\begin{proof}
See Appendix \ref{App-OD-2nd} for the detailed proof.
\end{proof}

Similarly, the fourth-order accuracy of the scheme (\ref{KKT-h-4}) can be proved 
assuming a higher regularity of the solutions as stated
in the following theorem.
\begin{theorem}\label{Thm-KKT-h-4}
 {Suppose the exact solution triple of (\ref{KKT}) satisfies $\{z,u,p\}\subset C^6(\overline\Omega)$,}
 then the  \textnormal{(OD-4)} finite difference scheme  (\ref{KKT-h-4}) has a fourth-order accuracy with respect to the infinity norm $\|\cdot\|_\infty$.
\end{theorem}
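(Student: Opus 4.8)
The plan is to reuse the two-ingredient scheme---consistency plus stability---employed for Theorem~\ref{Thm-KKT-h} in Appendix~\ref{App-OD-2nd}, now with the compact nine-point operators $F_h,R_h$ in place of $\Delta_h$ and with Lemma~\ref{Lem1}(ii) replacing Lemma~\ref{Lem1}(i). The first step is to establish the $O(h^4)$ local truncation estimate for the pair $(F_h,R_h)$: for every $v\in C^6(\overline\Omega)$ that vanishes on $\partial\Omega$,
\[
 F_h v = R_h(-\Delta v) + O(h^4)\qquad\text{on }\Omega_h,
\]
uniformly in $\|\cdot\|_\infty$ (hence in $\|\cdot\|$). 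This is the defining feature of the fourth-order compact scheme: a Taylor expansion gives $F_h v = -\Delta v - \tfrac{h^2}{12}\Delta^2 v + O(h^4)$ (the $O(h^4)$ remainder controlled by the sixth derivatives of $v$) and $R_h w = w + \tfrac{h^2}{12}\Delta w + O(h^4)$ for $w\in C^4$, and choosing $w=-\Delta v$ makes the $O(h^2)$ terms cancel. Applying this with $v=z$ (using $-\Delta z=u+f\in C^4$, which holds because $z\in C^6$), with $v=p$ (using $-\Delta p=g-z\in C^4$, which holds because $p,z\in C^6$), and observing that the gradient equation $\alpha u-p=0$ is discretized exactly, one finds that the restriction of the exact triple $\{z,u,p\}$ to $\Omega_h$ satisfies the (OD-4) system up to residuals $\tau_1,\tau_2$ with $\|\tau_1\|,\|\tau_2\|=O(h^4)$.

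Next, introduce the error grid functions $e_z=z-z_h$, $e_p=p-p_h$, $e_u=u-u_h$ (restrictions to $\Omega_h$), which by subtraction solve
\[
 F_h e_z - R_h e_u = \tau_1,\qquad F_h e_p + R_h e_z = \tau_2,\qquad \alpha e_u - e_p = 0 .
\]
Eliminating $e_u=e_p/\alpha$, pairing the first equation with $e_p$ and the second with $e_z$, and using that $F_h$ is symmetric so that $(F_h e_z,e_p)=(F_h e_p,e_z)$, the $F_h$-terms cancel and one is left with
\[
 \tfrac{1}{\alpha}(R_h e_p,e_p)+(R_h e_z,e_z)=(\tau_2,e_z)-(\tau_1,e_p).
\]
Now $R_h$ is symmetric positive definite with $h$-independent spectral bounds $\tfrac13 I\preceq R_h\preceq I$ (its Fourier symbol equals $\tfrac{1}{12}(8+2\cos\theta_1+2\cos\theta_2)\in[\tfrac13,1]$), so the left side is bounded below by $\tfrac{1}{3}\big(\tfrac1\alpha\|e_p\|^2+\|e_z\|^2\big)$; Cauchy--Schwarz and Young's inequality on the right then absorb the error norms and yield $\|e_z\|,\|e_p\|=O(h^4)$, whence $\|e_u\|=\tfrac1\alpha\|e_p\|=O(h^4)$.

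Finally, I would upgrade the $\|\cdot\|$ bound to the required $\|\cdot\|_\infty$ bound via Lemma~\ref{Lem1}(ii). From the error equations, $F_h e_z=R_h e_u+\tau_1$ and $F_h e_p=\tau_2-R_h e_z$, and since $\|R_h\|\le1$ we get $\|F_h e_z\|\le\|e_u\|+\|\tau_1\|=O(h^4)$ and $\|F_h e_p\|\le\|e_z\|+\|\tau_2\|=O(h^4)$. Because $e_z$ and $e_p$ vanish on the boundary nodes (both $z,p$ and their discrete counterparts satisfy the homogeneous Dirichlet condition), Lemma~\ref{Lem1}(ii) gives $\|e_z\|_\infty\le C\|F_h e_z\|=O(h^4)$ and $\|e_p\|_\infty\le C\|F_h e_p\|=O(h^4)$, and then $\|e_u\|_\infty=\tfrac1\alpha\|e_p\|_\infty=O(h^4)$, which is precisely the claimed fourth-order accuracy.

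The step I expect to be the main obstacle is the consistency estimate in Step~1: one must verify carefully that the $O(h^2)$ contributions of the compact Laplacian $F_h$ and of the right-hand-side averaging $R_h$ cancel exactly, that the leftover remainder is genuinely $O(h^4)$ rather than merely $o(h^2)$ under the stated $C^6$ regularity, and that the stencils adjacent to $\partial\Omega$ are consistent with the homogeneous Dirichlet data built into $F_h$. A secondary point worth recording is that the smoothness of the data $f$ and $g$ used there is not an extra hypothesis but a consequence of $\{z,u,p\}\subset C^6(\overline\Omega)$ through the state and adjoint equations. Everything else---symmetry of $F_h$, the $h$-uniform spectral bounds on $R_h$, the energy cancellation, and the $\|\cdot\|_\infty$ bootstrap---is a routine adaptation of the second-order argument in Appendix~\ref{App-OD-2nd}.
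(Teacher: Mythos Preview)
Your argument is correct, but it takes a genuinely different route from the paper's. The paper tests the first and second error equations against $F_h e_z$ and $F_h e_p$, respectively, so that the cancellation of the mixed terms $(R_h e_u,F_h e_z)$ and $(R_h e_z,F_h e_p)$ requires the commutativity $F_hR_h=R_hF_h$, which the paper proves explicitly via Kronecker-product identities; this yields $\|F_h e_z\|,\|F_h e_p\|=O(h^4)$ in one step and Lemma~\ref{Lem1}(ii) closes immediately. You instead test against $e_p$ and $e_z$, so the mixed $F_h$-terms cancel by symmetry of $F_h$ alone, and you then exploit the $h$-uniform lower spectral bound $R_h\succeq\tfrac13 I$ to obtain $\|e_z\|,\|e_p\|=O(h^4)$ first, followed by a bootstrap through the error equations to reach $\|F_h e_z\|,\|F_h e_p\|=O(h^4)$. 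Your route is more elementary in that it never needs $F_hR_h=R_hF_h$ and would survive replacing $R_h$ by any symmetric positive-definite averaging with $h$-uniform bounds; the paper's route is one step shorter and dovetails with later arguments (e.g., the proof of Theorem~\ref{Thm-DO-4th}) where the commutativity identity is reused anyway.
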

\begin{proof}
See Appendix  \ref{App-OD-4th} for the detailed proof.
\end{proof}

{
\begin{remark}
%It is often more convenient to directly impose regularity assumptions on the solutions instead of the given data, 
%since this avoids technically more involved discussion on PDE regularity theory \cite{Gilbarg_1983,Jovanovic2014}. 
The conclusions in Theorems \ref{Thm-KKT-h} and \ref{Thm-KKT-h-4} hold only when the data $f$ and $g$ and the domain are sufficiently smooth \cite{Thom_e_2001}. 
Although the assumptions on the regularity of solution triple are rather restrictive, considering the fact that $f\in L^2(\Omega)$ and $g\in L^2(\Omega)$ in most realistic applications, such discussion is a first step toward investigating more complicated cases.
%For example, we would have $\{z,u,p\}\subset C^k(\overline\Omega)$ with $\{f, g\}\subset C^{k-2}(\overline\Omega)$ and a sufficiently smooth boundary $\partial\Omega$.
%One would expect weaker convergence estimates if the data $f$ and $g$ are less regular.
%Let $C^{k,\sigma} (\overline\Omega)$ denotes the H\"{o}lder space \cite{evans2010partial},
%which consists of those functions in $C^k(\overline\Omega)$ whose $k$-th partial derivatives are bounded and  H\"{o}lder  continuous with exponent $\sigma\in (0,1]$.
%For the case of piecewise (or  Lipschitz) smooth rectangle boundary, one can obtain $\{z,u,p\}\subset C^{k,\sigma} (\overline\Omega)$, 
%it has been shown that the presence of a corner with an acute inner angle does not affect the order of accuracy \cite{Laasonen:1958}, 
%provided $\{f, g\}\subset C^{k-2,\sigma}(\overline\Omega)$ and they satisfy suitable compatibility conditions at the four corners \cite[Thm. 6.4.2.6]{Grisvard_2011}.
%Our main contribution is to identify, understand, and hopefully resolve the unexpected numerical oscillations in several reasonable discretize-then-optimize algorithms. 
Several test problems (Examples 1-3) considered in Section \ref{s_num} indeed have very smooth solutions that fulfill the assumptions.  
We also provide a test case (Example 4) in which these assumptions are not met (see \cite{Borzi2005,Borzi2007} for related discussion). 
It is possible to slightly weaken the current regularity assumptions
while retaining the same order of accuracy (possibly in different norms), 
but with technically more involved cell-average of the right-hand-side and integral representations
(instead of Taylor series expansions); we refer to {\cite[p. 68]{hackbusch1992elliptic}} and {\cite[p. 127]{Jovanovic2014}} for further discussion.
%However, we will not go into further detail on this regularity issue for simplicity.
%To minimize the regularity assumptions (also depends on the smoothness of domain), one may choose to use finite element discretizations \cite{ciarlet2002finite,brenner2007mathematical},
%which will be left as our future work.
In this paper, we mainly focus on investigating and comparing the subtle convergence properties of OD and DO algorithms
with standard finite difference discretizations.
\end{remark}
}

\section{Discretize-then-Optimize algorithms with finite difference discretizations}
\label{s_fdm}
We now briefly describe how the DO approach works for the same optimal control problem (\ref{EllipticObj2}-\ref{EllipticState2}). 
With the same uniform mesh and the given homogeneous boundary conditions, 
the objective functional (\ref{EllipticObj2}) can be approximated by the trapezoidal rule as
\begin{align}\label{EllipticObj2-Trap}
  J_h(z_h,u_h)&=\frac{1}{2} (z_h-g_h)^T (z_h-g_h)+\frac{\alpha}{2}  u_h^T u_h,
\end{align}
which has a formal second-order accuracy. 
Note that, to better match the DO approach, we omit the scaling factor of $h^2$ appearing in the numerical quadrature for simplicity of notations, which will not affect our analysis in the following discussion. 
%To better match with the DO approach, without loss of approximation accuracy, we have intentionally re-scaled the objective functional by dividing the term $h^2$ from quadrature rules.
Similar to the aforementioned OD approach, we discretize the state equation (\ref{EllipticState2}) by the second-order accurate 5-point finite difference scheme, which yields
 \eq \label{EllipticState2-Trap}  
\begin{aligned} 
-\Delta_h z_h-u_h&=f_h.
\end{aligned}  
\ee
This is the `discretize' step of the DO approach.
Obviously, the obtained discretization (\ref{EllipticObj2-Trap}-\ref{EllipticState2-Trap}) gives rise to  a large-scale finite-dimensional linearly constrained optimization problem,
which can be solved by any existing optimization algorithms.
In fact, the obtained numerical approximations would approximately solve the corresponding first-order necessary optimality KKT system by the strict convexity of (\ref{EllipticObj2-Trap}-\ref{EllipticState2-Trap}) regardless of the selected optimization algorithms. 
The discrete optimality KKT system can be derived by forming a Lagrange functional (through introducing a discrete Lagrange multiplier or adjoint state $p_h$)
 \begin{align}\label{Lagrange-Trap}
\mathcal{L}(z_h,u_h,p_h)&=\frac{1}{2} (z_h-g_h)^T(z_h-g_h)+\frac{\alpha}{2}  u_h^T u_h+p_h^T ( -\Delta_h z_h-u_h-f_h)
 \end{align}
and then setting its gradient to be zero, which gives 
\eq \label{h-KKT-Trap} 
 \mbox{(DO-2-Trap)}\quad\left\{
\begin{aligned} 
\mathcal{L}_{p_h}&=-\Delta_h z_h-u_h-f_h&=0,\\
\mathcal{L}_{z_h}&=-\Delta_h p_h+z_h- g_h&=0,\\
\mathcal{L}_{u_h}&=\alpha u_h-p_h &=0.
\end{aligned} \right.
\ee
Coincidentally, this KKT system (\ref{h-KKT-Trap}) from the DO approach is identical to the above KKT system (\ref{KKT-h}) obtained from the OD approach.
In this case, the DO approach and OD approach are said to be \textit{commutative} and the discretization scheme has the optimal convergence properties.
However, we will demonstrate in the following that such a `coincidence' does not happen in general. 
Indeed, we only simply replace the trapezoidal rule used in approximating the objective functional (\ref{EllipticObj2}) with the more accurate, fourth-order (composite) Simpson's rule, which gives
%More specifically, we approximate the objective functional (\ref{EllipticObj2}) with the fourth-order accurate Simpson rule to get
\begin{align}\label{EllipticObj2-Simp}
  J_h(z_h,u_h)&=\frac{1}{2} (z_h-g_h)^TQ_h (z_h-g_h)+\frac{\alpha}{2}  u_h^TQ_h u_h,
\end{align}
where $Q_h$ denotes a positive diagonal matrix corresponding to the quadrature weights of the 2D composite Simpson's rule \cite{davis2014methods}.
With the standard Kronecker product notation $\otimes$, $Q_h$ can be explicitly expressed to be 
\[
 Q_h=\frac{1}{9}\diag\left([4,2,4,2,\cdots] \otimes [4,2,4,2,\cdots] \right).
\]
Using the new discretized functional (\ref{EllipticObj2-Simp}) and the same second-order discretization (\ref{EllipticState2-Trap}) of the state equation, 
we can derive the corresponding discrete optimality KKT system
\eq \label{h-KKT-Simp} 
 \mbox{(DO-2-Simp)}\quad\left\{
\begin{aligned} 
-\Delta_h z_h-u_h&=f_h,\\
-\Delta_h p_h+Q_h z_h&=Q_h g_h,\\
\alpha Q_h u_h-p_h &=0.
\end{aligned} \right.
\ee
Clearly, if $Q_h$ equals to an identity matrix then (\ref{h-KKT-Simp}) is identical to (\ref{h-KKT-Trap}).  
But, when the Simpson's rule is used, this is not the case anymore. 
%Hence, (\ref{h-KKT-Simp}) is indeed very different from (\ref{h-KKT-Trap}). 
Important questions then would arise: 
%what would one expect to get from solving (\ref{h-KKT-Simp})? 
does the approximate solution of (\ref{h-KKT-Simp}) converge to the exact solution of (\ref{KKT})?
If not, what causes the undesirable bad approximation?
%The answer is not obvious at all.

In the following, we will illustrate what actually would happen in numerical simulations by considering a simple 2D example.
Let $\alpha=0.1$ and choose $f, g$ such that the exact optimal solution reads
$$
  z(x,y)=\sin(\pi x)\sin(\pi y),\quad u(x,y)=\sin(2\pi x)\sin(2\pi y)/\alpha.
$$
In Figure \ref{Trap_vs_Simp}, we show both surface and color-map of the computed optimal control $u_h$ with the trapezoidal and Simpson's rules, respectively.
Very different from the trapezoidal rule case (\ref{h-KKT-Trap}), we observe that the Simpson's rule case (\ref{h-KKT-Simp}) gives a badly behaved approximation (with spurious checkerboard oscillations). 
We performed a mesh refinement test on both approaches and list the convergence results in Table \ref{T1}.
 \begin{figure}[H]
\centering{\resizebox{0.45\textwidth}{!}{\includegraphics{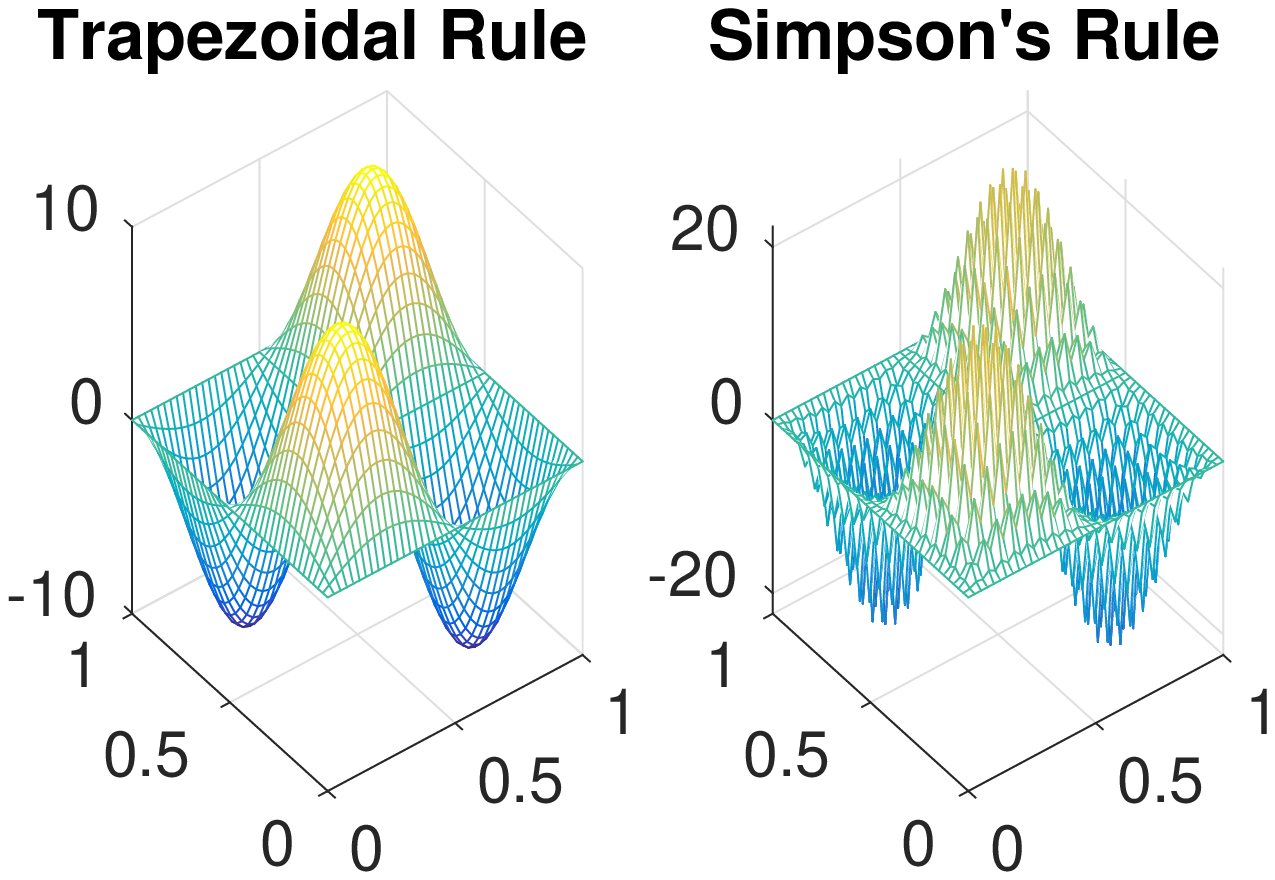}}}
\centering{\resizebox{0.45\textwidth}{!}{\includegraphics{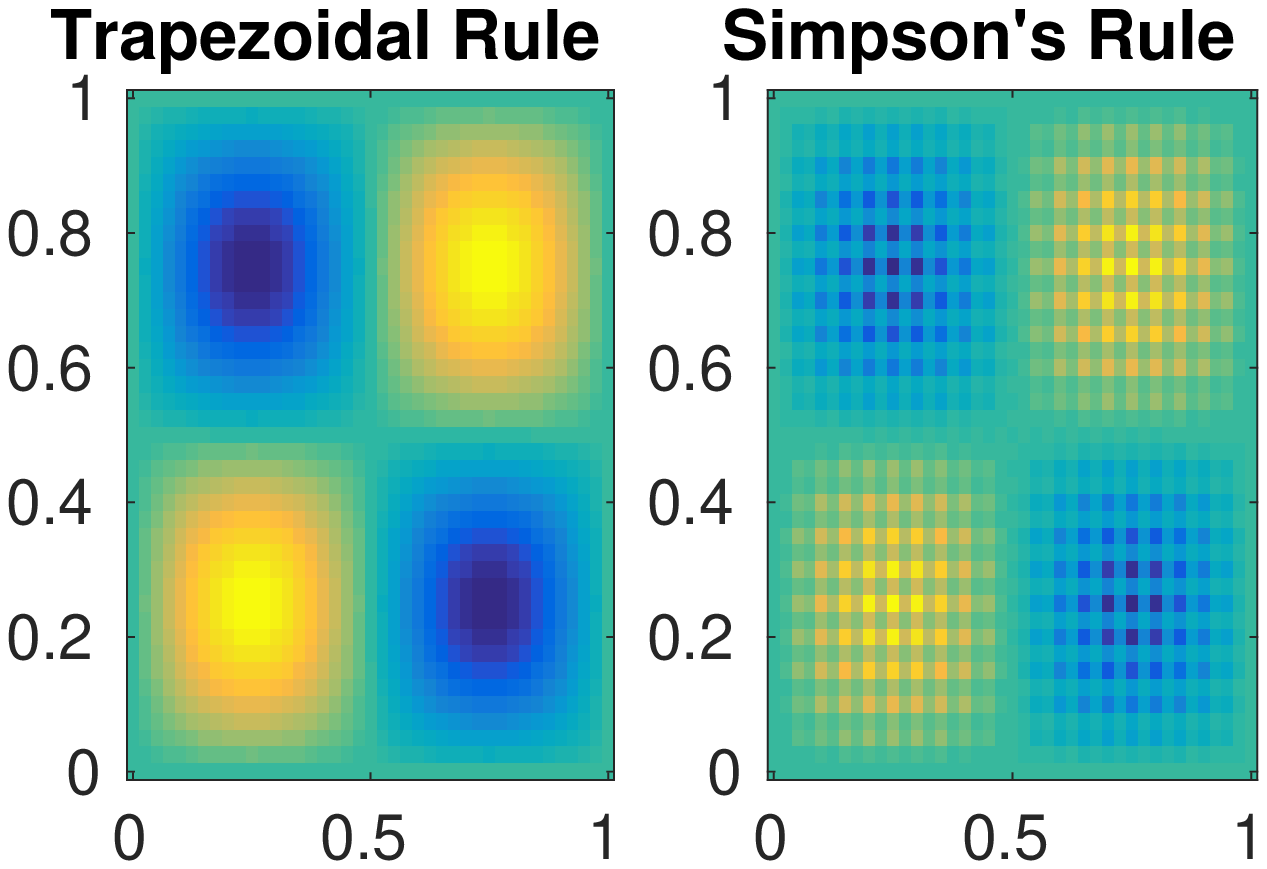}}}
\caption{Comparison of computed $u_h$ by the second-order scheme with trapezoidal and Simpson's rule, respectively ($h=1/40$).}
\label{Trap_vs_Simp}
\end{figure}

Recall that (\ref{EllipticObj2-Simp}) is derived by applying the Simpson's rule, which yields more accurate approximation of the objective functional than (\ref{EllipticObj2-Trap}) that uses the trapezoidal rule.
Hence, after combining with the state equation discretized by the same central finite difference scheme, we would reasonably expect the computed solution approximation of (\ref{h-KKT-Simp}) has at least the same level of accuracy as that of (\ref{h-KKT-Trap}).
Unfortunately,  such a reasonable expectation fails to hold and the computed solution of (\ref{h-KKT-Simp}) does not converge to the exact solution of the 
original continuous problem as shown in Figure \ref{Trap_vs_Simp} (the case with Simpson's rule). 
From another perspective, this failure is not completely surprising since the discretized KKT system  (\ref{h-KKT-Simp})
based on the Simpson's rule \textit{dramatically} deviates from the discretized KKT system (\ref{KKT-h}) given by the OD approach.
Indeed, we can reformulate (\ref{h-KKT-Simp}) into the following form (aligned with the system (\ref{KKT-h}))
\eq \label{h-KKT-Simp-ref} 
 \left\{
\begin{aligned} 
-\Delta_h z_h-u_h&=f_h,\\
-\Delta_h p_h+z_h&=g_h+\bm{(Q_h-I_h) g_h + (I_h-Q_h) z_h},\\
\alpha u_h-p_h &=\bm{\alpha (I_h-Q_h) u_h},
\end{aligned} \right.
\ee
where the extra residual terms (highlighted in bold) on the right hand side can be shown to be of $O(1)$ since $\|I_h-Q_h\|_{\infty}=O(1)$.
In other words, the discrete system (\ref{h-KKT-Simp}) or (\ref{h-KKT-Simp-ref}) is \textit{inconsistent} to the continuous KKT system (\ref{KKT})
and hence the obtained numerical approximation will not converge to the exact solution of  (\ref{KKT}). 
{However, when we will consider a high-order discretization scheme for the state equation, we can numerically show that such a consistency between the discrete and continuous KKT systems is only a sufficient condition but not a necessary one. }
%However, such schemes sometimes remain convergent for both OD and OD approaches, even when they lead to different discretized KKT systems.
%It will become very clear soon that the OD approach and the DO approach will in general not lead to the same discrete KKT system with high-order discretization schemes.

More specifically, we use a fourth-order 9-point finite difference scheme for the discretization of the state equation, and consider both the trapezoidal rule and Simpson's rule for the objective functional approximation, respectively. 
%Why bother to use the fourth-order accurate Simpson's rule given the fact that trapezoidal rule works well?
%Originally, we expect to achieve an overall fourth-order accuracy upon discretizing the state equation 
%with a standard fourth-order 9-point finite difference scheme and the objective functional with the Simpson's rule.
%Once again, such an expectation fails to work due to the discrepancy between the DO and OD approaches. 
%Nevertheless, the second-order accurate trapezoidal rule delivers a fourth-order accuracy with the fourth-order scheme for the state equation,
%even though it does not lead to the same KKT systems of the DO and OD approach. 
%While in OD approach, such a fourth-order scheme can go with either the trapezoidal rule or Simpson rule.
Note that the former gives the following discretized constrained optimization 
 \eq \label{DO-Trap-4}  
\left\{
\begin{aligned} 
\min\quad& J_h(z_h,u_h)  =\frac{1}{2} (z_h-g_h)^T (z_h-g_h)+\frac{\alpha}{2}  u_h^T u_h\\
\mbox{s.t.}\quad& F_h z_h-R_h u_h =R_h f_h,
\end{aligned}  \right.
\ee
and the latter leads to
\eq \label{DO-Simp-4}  
 \left\{
\begin{aligned} 
\min\quad& J_h(z_h,u_h)  =\frac{1}{2} (z_h-g_h)^TQ_h (z_h-g_h)+\frac{\alpha}{2}  u_h^TQ_h u_h\\
\mbox{s.t.}\quad& F_h z_h-R_h u_h =R_h f_h.
\end{aligned}  \right.
\ee
Then, by forming the Lagrange functional and setting its gradient to be zero, we obtain the corresponding discretized KKT system
 \eq \label{h-KKT-Trap-4} 
 \mbox{(DO-4-Trap)}\quad \left\{
\begin{aligned} 
F_h z_h-R_h u_h&= R_h f_h,\\
F_h p_h+ z_h&= g_h,\\
\alpha u_h-R_h p_h &=0,
\end{aligned} \right.
\ee
for the trapezoidal rule and 
 \eq \label{h-KKT-Simp-4} 
 \mbox{(DO-4-Simp)}\quad\left\{
\begin{aligned} 
F_h z_h-R_h u_h&= R_h f_h,\\
F_h p_h+ Q_h z_h&= Q_h g_h,\\
\alpha Q_h u_h-R_h p_h &=0,
\end{aligned} \right.
\ee
for the Simpson's rule.
Clearly, both the KKT systems (\ref{h-KKT-Trap-4}) and (\ref{h-KKT-Simp-4}) based on the DO approach
are obviously different from the KKT system (\ref{KKT-h-4}) obtained from the OD approach with the same 9-point discretization scheme for the state equation. 
However, according to our numerical tests on the 2D elliptic control problem listed in the Table \ref{T1}, the system (\ref{h-KKT-Trap-4}) based on the trapezoidal rule gives convergent approximation with a fourth-order accuracy,
while the system (\ref{h-KKT-Simp-4}) based on the Simpson's rule fails to converge. 
It illustrates that the commutativity between the DO and OD approaches is not a necessary condition. 
%It appears to be difficult to determine whether the KKT systems  (\ref{h-KKT-Trap-4}) and (\ref{h-KKT-Simp-4}) are convergent or not,  
%only based on their discrepancy from (\ref{KKT-h-4}).
To intuitively understand the observed fourth-order accuracy of the system (\ref{h-KKT-Trap-4}), we
can reformulate it into the following form (in view of matching (\ref{KKT-h-4}))
 \eq \label{h-KKT-Trap-4-ref} 
 \left\{
\begin{aligned} 
F_h z_h-R_h u_h&= R_h f_h,\\
F_h p_h+ R_h z_h&= R_h g_h+{(R_h-I_h) (z_h-g_h)},\\
\alpha u_h-p_h &={(R_h-I_h)p_h}.
\end{aligned} \right.
\ee
It is straightforward to check that $\|(R_h-I_h)p_h\|_\infty=O(h^2)$ and $\|(R_h-I_h) (z_h-g_h)\|_\infty=O(h^2)$
under mild regularity assumptions, which hence indicates that (\ref{h-KKT-Trap-4}) differs from 
(\ref{KKT-h-4}) only with some $O(h^2)$ perturbation. Therefore, we expect the system (\ref{h-KKT-Trap-4}) 
to deliver at least overall second-order accuracy, given the system (\ref{KKT-h-4}) has a fourth-order accuracy.
In fact, both our numerical results and convergence analysis stated in Theorem \ref{Thm-DO-4th} show that the obtained approximation $z_h$ and $u_h$ in (\ref{h-KKT-Trap-4}) actually has a fourth-order accuracy.
\begin{theorem} \label{Thm-DO-4th}
{Suppose the exact solution triple of (\ref{KKT}) satisfies $\{z,u,p\}\subset C^6(\overline\Omega)$,}
 then the finite difference scheme  \textnormal{(DO-4-Trap)} or (\ref{h-KKT-Trap-4}) has a fourth-order accuracy in $u_h$ and $z_h$ 
 {and a second-order accuracy in $p_h$} with respect to the infinity norm $\|\cdot\|_\infty$.
\end{theorem}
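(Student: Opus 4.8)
The plan is to prove the theorem by showing that the discretize-then-optimize system (\ref{h-KKT-Trap-4}) and the optimize-then-discretize system (\ref{KKT-h-4}) produce \emph{exactly the same} discrete state $z_h$ and control $u_h$ (and adjoints that differ only by a fixed, $h$-dependent rescaling), so that the fourth-order estimates for $z_h,u_h$ follow verbatim from Theorem \ref{Thm-KKT-h-4}, and only the adjoint $p_h$ needs a separate, direct argument. Throughout, $z,u,p$ (and their derivatives) also denote the restrictions of the exact solution of (\ref{KKT}) to $\Omega_h$.

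First I would record two elementary facts about these constant-coefficient stencils on the uniform grid: (a) the identity $R_h=I_h+\frac{h^2}{12}\Delta_h$, obtained by comparing the two stencils directly; and (b) that $F_h$, $R_h$ and $\Delta_h$ commute pairwise, because on a uniform rectangular grid with homogeneous Dirichlet data all three are simultaneously diagonalized by the two-dimensional discrete sine basis (equivalently, $F_h$ and $R_h$ are polynomials in the two commuting one-dimensional second-difference operators). Let $(z_h^\star,u_h^\star,p_h^\star)$ be the unique solution of (\ref{KKT-h-4}). I would then verify by direct substitution that the triple $(z_h^\star,\,u_h^\star,\,F_h^{-1}(g_h-z_h^\star))$ solves (\ref{h-KKT-Trap-4}): the first equation is literally the first line of (\ref{KKT-h-4}); the second reads $F_h(F_h^{-1}(g_h-z_h^\star))+z_h^\star=g_h$, which is immediate; and the third asks for $\alpha u_h^\star=R_hF_h^{-1}(g_h-z_h^\star)$, which follows from the second and third lines of (\ref{KKT-h-4}) together with the commutation $F_h^{-1}R_h=R_hF_h^{-1}$, since $\alpha u_h^\star=p_h^\star=F_h^{-1}R_h(g_h-z_h^\star)=R_hF_h^{-1}(g_h-z_h^\star)$. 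Because (\ref{h-KKT-Trap-4}) is the optimality system of the strictly convex quadratic program (\ref{DO-Trap-4}) with surjective constraint, its solution is unique; hence $z_h=z_h^\star$, $u_h=u_h^\star$ and $p_h=F_h^{-1}(g_h-z_h^\star)\;(=R_h^{-1}p_h^\star)$.

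With this identity in hand, Theorem \ref{Thm-KKT-h-4} gives at once $\|z-z_h\|_\infty=\|z-z_h^\star\|_\infty=O(h^4)$ and $\|u-u_h\|_\infty=O(h^4)$. For the adjoint I would combine the relation $F_hp_h=g_h-z_h^\star$ (the second line of (\ref{h-KKT-Trap-4}) with $z_h=z_h^\star$) with the compact nine-point consistency $F_hp=R_h(-\Delta p)+O(h^4)=R_h(g-z)+O(h^4)$ (the same Taylor expansion as in the proof of Theorem \ref{Thm-KKT-h-4}, licensed by $p\in C^6(\overline\Omega)$ and $-\Delta p=g-z$ from (\ref{KKT})). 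Subtracting and using $\|z-z_h^\star\|_\infty=O(h^4)$ yields
\[
F_h(p-p_h)=(R_h-I_h)(g-z)+O(h^4)=\frac{h^2}{12}\,\Delta_h(g-z)+O(h^4)=O(h^2)
\]
in the $\infty$-norm (hence in $\|\cdot\|$), the last step using that $g-z\in C^4(\overline\Omega)$ so that $\Delta_h$ of its restriction remains bounded as $h\to 0$. Since $p-p_h$ vanishes on the boundary nodes, Lemma \ref{Lem1}(ii) then gives $\|p-p_h\|_\infty\le C\|F_h(p-p_h)\|=O(h^2)$, which is the claimed second-order accuracy in $p_h$.

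The main obstacle — and essentially the only new ingredient — is the structural identity: recognizing that (\ref{h-KKT-Trap-4}) reduces \emph{exactly}, not just up to $O(h^2)$, to (\ref{KKT-h-4}) in its $(z_h,u_h)$ components. This hinges entirely on the commutation $F_hR_h=R_hF_h$, which is special to constant coefficients on a uniform grid and would fail on graded meshes or with variable coefficients, and on uniqueness of the discrete optimizer. Once that is secured the remainder is bookkeeping; the only care needed is to confirm the stencil identity $R_h=I_h+\frac{h^2}{12}\Delta_h$, to track the $C^6$ regularity exactly as in Theorem \ref{Thm-KKT-h-4} for the compact-scheme consistency, and to make sure the $O(h^2)$ remainders are $h^2$ times grid functions with $h$-uniformly bounded discrete derivatives, so that passing between $\|\cdot\|_\infty$ and $\|\cdot\|$ (the latter via Lemma \ref{Lem1}) is legitimate.
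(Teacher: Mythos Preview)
Your proposal is correct and follows essentially the same route as the paper: both arguments exploit the commutation $F_hR_h=R_hF_h$ to show that the $(z_h,u_h)$ components of the DO system (\ref{h-KKT-Trap-4}) coincide exactly with those of the OD system (\ref{KKT-h-4}), after which the fourth-order estimates are inherited from Theorem~\ref{Thm-KKT-h-4}. Your treatment of the second-order adjoint error via $F_h(p-p_h)=(R_h-I_h)(g-z)+O(h^4)$ and Lemma~\ref{Lem1}(ii) is in fact somewhat more carefully spelled out than the paper's one-line remark that $R_hp_h=p_h+O(h^2)$, but the underlying idea is the same.
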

\begin{proof}
See Appendix  \ref{App-DO-4th} for the detailed proof.
\end{proof} 

In Table \ref{T1}, we summarized the above discussed four different schemes within the framework of DO approach and their 
commutative properties with the OD approach, the observed convergence or divergence based on our numerical tests.
A key observation is that the commutativity between the DO and OD approaches seems to be only a sufficient but not necessary condition for an employed discretization scheme
to be convergent within the DO framework, e.g., (\ref{h-KKT-Trap-4}) is convergent.
{We point out that the above established convergence of the non-commutative DO scheme (\ref{h-KKT-Trap-4}) is new and 
has never been discussed in the literature.}
\begin{table}[H]
\centering
\caption{The convergence of the DO approach with different combination of schemes.}
 \begin{tabular}{|c|cc|c|c|c|c|}
 \hline 
 DO Schemes& Objective & State PDE & Commutative & Convergent & Order & Proof\\
  \hline 
 (DO-2-Trap) or (\ref{h-KKT-Trap})& Trapezoidal & Second-order & \ding{51} &  \ding{51} & $O(h^2)$&Thm \ref{Thm-KKT-h}\\ \hline
  (DO-2-Simp) or (\ref{h-KKT-Simp})& Simpson & Second-order & \ding{55} & \ding{55} & --&--\\ \hline
  (DO-4-Trap) or (\ref{h-KKT-Trap-4})& Trapezoidal & Fourth-order & \ding{55} &  \ding{51} & $O(h^4)$&Thm \ref{Thm-DO-4th}\\ \hline
  (DO-4-Simp) or  (\ref{h-KKT-Simp-4}) & Simpson & Fourth-order & \ding{55} & \ding{55} & --&--\\ \hline
\end{tabular}
\label{T1}
\end{table}

% \begin{remark} \label{StabilityB}
% With simple matrix analysis techniques, we can prove the stability of the DO scheme (\ref{h-KKT-Simp}),
% which, however, is not consistent to the continuous KKT optimality system (\ref{KKT}). 
% In other words, the scheme (\ref{h-KKT-Simp}) is not convergent.
% In fact, the reduced coefficient matrix of the system (\ref{h-KKT-Simp}) has the form
% \eq \label{KKT-h-system} 
%  \hat L_h=
% \bmt 
% Q^{-1}_h /\alpha & \Delta_h\\
% -\Delta_h & Q_h
% \emt,
% \ee
% which leads to the stability result
% \[
%  \|\hat  L_h^{-1}\|_2\le \frac{1}{\lambda_{\min}(\CH(\hat  L_h))}=\max\left(\frac{16\alpha}{9},\frac{9}{4}\right).
% \]
% \end{remark}

\section{Regularized Discretize-then-Optimize algorithms}
\label{s_reg1}

This section is devoted to better understanding and efficiently resolving the observed possible convergence failure of the DO approach with the Simpson's rule. 
The central question we would like to address is how to achieve the expected convergence of the DO approach, even when the underlying discretization scheme
does not guarantee the commutative property of the `optimize' and `discretize' processes.
We propose to add well-chosen regularization penalty terms to the objective functional, which turns out to work quite well in improving the convergence of the DO approach.
%Next, we propose a penalty-based regularization technique, 
It is shown to be very effective in removing the spurious oscillations based on our numerical experiments.

As shown in Figure \ref{Trap_vs_Simp}, the direct use of Simpson's rule leads to spurious checkerboard oscillations in the computed control approximations $u_h$.
To suppress such spurious oscillations, one of the natural approaches is to penalize the undesirable non-smoothness of the computed control approximations $u_h$, 
which can be straightforwardly implemented through adding to the original discrete objective functional a $H_1$ semi-norm regularization term such as 
\begin{align}
 \label{H1regUonly}
\gamma\|\nabla_h u_h\|_2^2=\gamma(\nabla_h u_h,\nabla_h u_h)=\gamma (u_h,-\Delta_h u_h)=\gamma u_h^T (-\Delta_h) u_h,
\end{align}
where $\nabla_h$ denotes the discrete gradient defined by forward finite difference.
%to the original discrete objective functional.
However, based on our numerical experiments, adding such a regularization term has very limited effects on suppressing the observed spurious oscillations,
and its overall performance is highly sensitive to the manually chosen regularization parameter $\gamma$. 
In particular, the optimal order of accuracy cannot be obtained even by tuning $\gamma$ carefully, 
%In other words, it is very difficult to choose a feasible parameter $\gamma$ such that the optimal order of accuracy is obtained, 
although we acknowledge that such a regularization term indeed promotes smoother approximations $u_h$.
In Figure \ref{RegUonly_2nd_Simp}, we plot the computed control approximations from adding the regularization term (\ref{H1regUonly}) with different values of $\gamma$. Among them, the case $\gamma=0.01$ gives smooth approximation of the control variable but is of an incorrect magnitude, while the case $\gamma=0.001$ gives the most satisfactory control approximation but it consists of spurious oscillations. {In all our tested cases, we did not observe any uniform convergence with a fixed order of accuracy as the mesh size is refined.}
%due to the introduced relatively large regularization term.
%{(We may want to remove this sentence)Note that simply adding the regularization term $\gamma\|\nabla_h u_h\|_2^2$ can not guarantee 
%the corresponding discretized KKT system is the same as or consistent with the one from OD approach, i.e., (\ref{KKT-h}) or  (\ref{KKT-h-4}).
%From this perspective, 
{It seems that adding such a regularization term only mildly alleviates but not completely eliminates the spurious oscillations,
which, of course, does not lead to very satisfactory approximations, as illustrated in Figure \ref{RegUonly_2nd_Simp}.}

\begin{figure}[H]
\centering{\resizebox{1\textwidth}{!}{\includegraphics{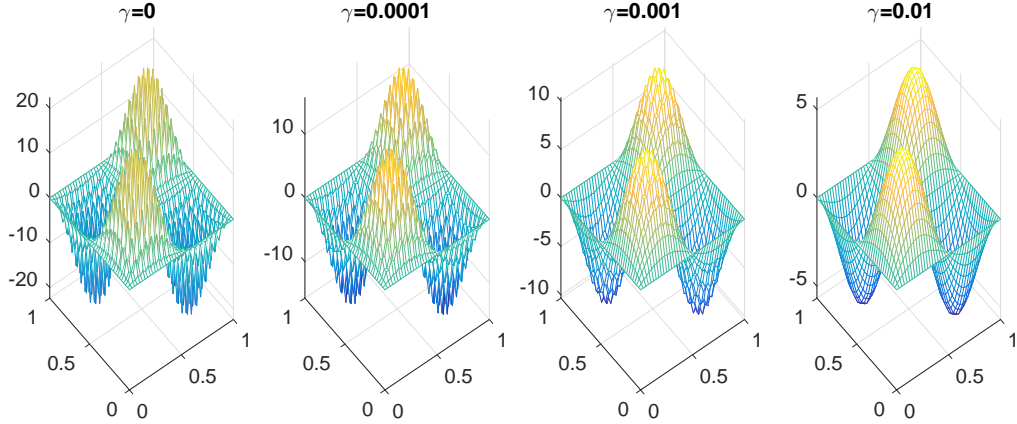}}} 
\caption{Computed $u_h$ by the second-order scheme with (\ref{H1regUonly})-based regularized Simpson's rule ($h=1/40$).}
\label{RegUonly_2nd_Simp}
\end{figure} 

Based on the above discussion, better regularization techniques are needed in order to fully eliminate the oscillations and achieve the optimal order of accuracy.
%To provide more clues on how we arrive at the following proposed regularization terms,
To this end, we turn our attention to scrutinize the difference between the discretized KKT systems (\ref{KKT-h-4}) and (\ref{h-KKT-Simp-4}).
By comparing (\ref{KKT-h-4}) and (\ref{h-KKT-Simp-4}), we conclude that they become identical if $Q_h$ is replaced by $R_h$.
Recall that $Q_h$ comes from the used Simpson's quadrature rule, which is a positive diagonal matrix.
What is the mathematical implication behind replacing $Q_h$ by $R_h$?
One possible interpretation is to decompose $R_h$  into the sum of two different stencils
\[
 R_h= \frac{1}{12}\bmt 0 &1 &0 \\1 & 8& 1\\0 & 1 & 0 \emt= \bmt 0 &0 &0 \\0 & 1& 0\\0 & 0& 0 \emt+\frac{1}{12}\bmt 0 &1 &0 \\1 & -4& 1\\0 & 1 & 0 \emt,
\]
where the first stencil corresponds to the trapezoidal rule and the second one is a scaled discrete Laplacian $\Delta_h$ associated to a negative $H_1$ semi-norm term. 
Thus, by subtracting a scaled $H_1$ semi-norm term from $Q_h$, we would achieve a matrix close to $R_h$. 
%In this view, we shall understand $R_h$ as the sum of the trapezoidal rule and a $H_1$ semi-norm regularization term.
%It would be more appropriate to think of the regularization term applies to the quadrature rule.
%In this perspective, the system (\ref{h-KKT-Trap-4}) based on the trapezoidal rule is just the case without any regularization term.
Inspired by this key observation, we propose a regularized optimal control strategy to improve the non-commutative DO-2-Simp and DO-4-Simp schemes by introducing the following modified objective function: 
%More specifically, we propose to regularize the used Simpson rule (twice) in the discretize objective functional into
\begin{align}\label{Obj-Simp-Reg0}
 \hat J_h =\frac{1}{2} (z_h-g_h)^T (Q_h-\gamma \Delta_h) (z_h-g_h)+\frac{\alpha}{2}  u_h^T (Q_h-\gamma \Delta_h) u_h.
\end{align}
This actually adds to $J_h$ two discrete $H_1$ semi-norm regularization terms 
({assuming $z_h-g_h=0$ on $\partial\Omega$})
$$(z_h-g_h)^T(-\gamma \Delta_h)(z_h-g_h)=\gamma \|\nabla_h (z_h-g_h)\|_2^2,\qquad u_h^T(-\gamma \Delta_h) u_h=\gamma \|\nabla_h u_h\|_2^2,$$ 
i.e.,
\begin{align}\label{Obj-Simp-Reg}
 \hat J_h =J_h+\frac{1}{2}\gamma \|\nabla_h (z_h-g_h)\|_2^2+\frac{\alpha}{2} \gamma \|\nabla_h u_h\|_2^2,
\end{align}
which will give rise to the following discrete KKT system (corresponding to the second-order scheme (\ref{h-KKT-Simp}))
\eq \label{h-KKT-Simp-reg} 
 \mbox{(DO-2-Simp-Reg)}\quad\left\{
\begin{aligned} 
-\Delta_h z_h-u_h&=f_h,\\
-\Delta_h p_h+(Q_h-\gamma \Delta_h)  z_h&=(Q_h-\gamma \Delta_h)  g_h,\\
\alpha (Q_h-\gamma \Delta_h)  u_h-p_h &=0.
\end{aligned} \right.
\ee
By taking $\gamma=1$, our numerical experiments show that the regularized KKT system (\ref{h-KKT-Simp-reg}) 
produces an expected second-order accuracy in control approximations.
Notice the regularized KKT system (\ref{h-KKT-Simp-reg}) would give the original system (\ref{h-KKT-Simp}) 
when $\gamma=0$, but our used regularization parameter $\gamma$ does not limit to zero.

Applying the same regularization technique to the fourth-order scheme (\ref{h-KKT-Simp-4}), we can similarly get
\eq \label{h-KKT-Simp-4-reg} 
 \mbox{(DO-4-Simp-Reg)}\quad\left\{
\begin{aligned} 
F_h z_h-R_h u_h&= R_h f_h,\\
F_h p_h+ (Q_h-\gamma \Delta_h) z_h&= (Q_h-\gamma \Delta_h) g_h,\\
\alpha (Q_h-\gamma \Delta_h) u_h-R_h p_h &=0.
\end{aligned} \right.
\ee
Obviously, this new regularized KKT system is very different from (\ref{KKT-h-4}), 
since $(Q_h-\gamma \Delta_h)\ne R_h$ for any $\gamma>0$.
Based on our numerical investigation (also see the following Example 2 and 3), 
the optimal choice of the free parameter $\gamma$ is $\gamma = h^{-2}$. 
In this case, the regularized KKT system (\ref{h-KKT-Simp-4-reg}) indeed 
achieves an optimal fourth-order accuracy in control approximations.
%By taking $\gamma=h^{-2}$, our numerical experiments (see the following Example 2) show that the regularized KKT system (\ref{h-KKT-Simp-4-reg}) 
%indeed delivers a very satisfactory fourth-order accuracy in control approximations.

Though originally motivated by the need of fixing the observed numerical failure of Simpson's rule, 
we also tested the proposed regularization terms with the case of trapezoidal quadrature rule, e.g., (\ref{h-KKT-Trap}) and (\ref{h-KKT-Trap-4}).
More specifically, we consider the following two discretized KKT systems
\eq \label{h-KKT-Trap-reg} 
 \mbox{(DO-2-Trap-Reg)}\quad\left\{
\begin{aligned} 
-\Delta_h z_h-u_h&=f_h,\\
-\Delta_h p_h+(I_h-\gamma \Delta_h) z_h&=(I_h-\gamma \Delta_h) g_h,\\
\alpha (I_h-\gamma \Delta_h) u_h-p_h &=0,
\end{aligned} \right.
\ee
and
\eq \label{h-KKT-Trap-4-reg} 
 \mbox{(DO-4-Trap-Reg)}\quad \left\{
\begin{aligned} 
F_h z_h-R_h u_h&= R_h f_h,\\
F_h p_h+ (I_h-\gamma \Delta_h)z_h&= (I_h-\gamma \Delta_h)g_h,\\
\alpha (I_h-\gamma \Delta_h)u_h-R_h p_h &=0,
\end{aligned} \right.
\ee
 associated to the second-order scheme ($\gamma=1$) and fourth-order scheme ($\gamma=h^{-2}$), respectively.
{
Our numerical experiments show that the proposed regularized schemes are also capable of delivering the
expected order of accuracy in optimal state and control approximations,
which are rigorously proved in the following Theorems \ref{Thm-DO-2th-Reg} and \ref{Thm-DO-4th-Reg}.
In this sense, our proposed $H_1$-regularized DO approach is very robust in general.
\begin{theorem} \label{Thm-DO-2th-Reg}
{Suppose the exact solution of (\ref{KKT}) satisfies $\{z,u,p\}\subset C^4(\overline\Omega)$,}
 then the finite difference scheme  \textnormal{(DO-2-Trap-Reg)} or (\ref{h-KKT-Trap-reg}) has a second-order accuracy in $u_h$ and $z_h$ with respect to the infinity norm $\|\cdot\|_\infty$.
\end{theorem}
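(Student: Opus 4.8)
The plan is to show that the regularized scheme \textnormal{(DO-2-Trap-Reg)}, i.e. (\ref{h-KKT-Trap-reg}), produces \emph{exactly} the same grid functions $z_h$ and $u_h$ as the commutative scheme \textnormal{(OD-2)}, i.e. (\ref{KKT-h}), whose second-order convergence has already been established in Theorem \ref{Thm-KKT-h}. The $H_1$ regularization will turn out to alter only the adjoint variable $p_h$, which is exactly why the statement asserts convergence of $u_h$ and $z_h$ only (indeed $p_h=\alpha(I_h-\gamma\Delta_h)u_h$ contains the $O(1)$ term $-\alpha\gamma\Delta_h u_h$, so $p_h$ is not expected to converge).

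First I would eliminate $u_h$ and $p_h$ from (\ref{h-KKT-Trap-reg}). The first equation gives $u_h=-\Delta_h z_h-f_h$ and the third gives $p_h=\alpha(I_h-\gamma\Delta_h)u_h$; substituting these into the second equation and using that every operator here is a polynomial in $\Delta_h$ (hence they all commute) to collect the common factor $(I_h-\gamma\Delta_h)$, I would obtain
\[
(I_h-\gamma\Delta_h)\big[(\alpha\Delta_h^2+I_h)z_h-g_h+\alpha\Delta_h f_h\big]=0 .
\]
Since $-\Delta_h$ is symmetric positive definite under the homogeneous Dirichlet conditions, $I_h-\gamma\Delta_h=I_h+\gamma(-\Delta_h)$ has all eigenvalues $\ge 1>0$ for $\gamma=1$, hence is invertible, and cancelling it yields the reduced equation
\[
(\alpha\Delta_h^2+I_h)\,z_h=g_h-\alpha\Delta_h f_h .
\]
Performing the identical elimination on (\ref{KKT-h}) (there $p_h=\alpha u_h$ and $u_h=-\Delta_h z_h-f_h$) produces the \emph{same} reduced equation. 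Because $\alpha\Delta_h^2+I_h$ is invertible, that equation has a unique solution, so the $z_h$ of (\ref{h-KKT-Trap-reg}) coincides with the $z_h$ of (\ref{KKT-h}), and then $u_h=-\Delta_h z_h-f_h$ coincides as well. Conversely, any $z_h$ solving the reduced equation, together with the above formulas for $u_h$ and $p_h$, solves (\ref{h-KKT-Trap-reg}), so the two systems are genuinely equivalent. Finally I would invoke Theorem \ref{Thm-KKT-h}: under $\{z,u,p\}\subset C^4(\overline\Omega)$ the \textnormal{(OD-2)} approximations satisfy $\|z_h-z\|_\infty=O(h^2)$ and $\|u_h-u\|_\infty=O(h^2)$, and by the equivalence just established the same bounds hold for the solution of (\ref{h-KKT-Trap-reg}).

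The step needing the most care is the bookkeeping in the elimination: verifying that $(I_h-\gamma\Delta_h)$ factors out cleanly and is invertible, which is routine since $I_h$, $\Delta_h$ and their powers commute. Two remarks are worth making. First, the value $\gamma=1$ plays no role beyond $\gamma\ge 0$, which explains why $z_h$ and $u_h$ are insensitive to $\gamma$ while $p_h$ is not. Second, one could instead attempt a direct truncation-error analysis of the reduced equation using Lemma \ref{Lem1}, but the residual would then contain a term of the form $\alpha\Delta_h^2 z$, controllable by $O(h^2)$ only if $z\in C^6(\overline\Omega)$; the reduction to (\ref{KKT-h}) sidesteps this entirely and keeps the proof within the stated $C^4$ regularity.
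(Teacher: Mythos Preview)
Your proof is correct and follows essentially the same approach as the paper. Both arguments eliminate $p_h$, exploit that all operators involved are polynomials in $\Delta_h$ (hence commute), factor out and cancel the invertible $(I_h-\gamma\Delta_h)$, and thereby reduce (\ref{h-KKT-Trap-reg}) to the reduced form of the \textnormal{(OD-2)} system, at which point Theorem~\ref{Thm-KKT-h} applies; the only cosmetic difference is that you eliminate one step further to a single equation in $z_h$, whereas the paper stops at the two-equation system (\ref{KKT-h-A}) in $(z_h,u_h)$.
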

\begin{proof}
See Appendix  \ref{App-DO-2th-Reg} for the detailed proof.
\end{proof} 

\begin{theorem} \label{Thm-DO-4th-Reg}
{Suppose the exact solution of (\ref{KKT}) satisfies $\{z,u,p\}\subset C^6(\overline\Omega)$,}
 then the finite difference scheme  \textnormal{(DO-4-Trap-Reg)} or (\ref{h-KKT-Trap-4-reg}) has a fourth-order accuracy in $u_h$ and $z_h$ with respect to the infinity norm $\|\cdot\|_\infty$.
\end{theorem}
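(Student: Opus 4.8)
The plan is to reduce Theorem~\ref{Thm-DO-4th-Reg} to the already established Theorem~\ref{Thm-DO-4th} by showing that the $H_1$ regularization matrix $I_h-\gamma\Delta_h$ acts only as a rescaling of the discrete adjoint state and leaves the computed state and control \emph{unchanged}.

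First I would record the structural fact that $\Delta_h$, $F_h$ and $R_h$ pairwise commute. Writing $A$ for the one-dimensional three-point second-difference matrix (with the homogeneous Dirichlet condition built in), one has $-\Delta_h=A\otimes I+I\otimes A$, $\;F_h=A\otimes I+I\otimes A-\tfrac{h^2}{6}\,A\otimes A$, and $R_h=I_h+\tfrac{h^2}{12}\Delta_h$, so all three lie in the commutative algebra generated by the commuting operators $A\otimes I$ and $I\otimes A$ (equivalently, they are simultaneously diagonalized by the two-dimensional discrete sine transform). Consequently $M_h:=I_h-\gamma\Delta_h$, being a polynomial in $\Delta_h$, commutes with $F_h$ and $R_h$. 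I would also note that, since $-\Delta_h$ is symmetric positive definite and $\gamma>0$, the matrix $M_h$ is symmetric positive definite hence invertible; likewise $F_h$ is positive definite and $R_h$ has eigenvalues in $(1/3,1)$, so both are invertible.

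Next I would introduce the rescaled multiplier $\hat p_h:=M_h^{-1}p_h$. Substituting $p_h=M_h\hat p_h$ into (\ref{h-KKT-Trap-4-reg}) and using the commutativity to move $M_h$ across $F_h$ and $R_h$ and then cancel it (the second and third equations become $M_h(F_h\hat p_h+z_h-g_h)=0$ and $M_h(\alpha u_h-R_h\hat p_h)=0$, and $M_h$ is invertible), the system collapses exactly to $F_hz_h-R_hu_h=R_hf_h$, $\;F_h\hat p_h+z_h=g_h$, $\;\alpha u_h-R_h\hat p_h=0$, i.e.\ to the (DO-4-Trap) system (\ref{h-KKT-Trap-4}). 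That system is uniquely solvable — after elimination it reads $(\alpha F_h^2R_h^{-1}+R_h)u_h=F_hg_h-R_hf_h$ with symmetric positive definite coefficient matrix, it being the optimality system of the strictly convex problem (\ref{DO-Trap-4}) — so the state and control components produced by (\ref{h-KKT-Trap-4-reg}) coincide, as grid functions, with those produced by (\ref{h-KKT-Trap-4}).

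Finally I would invoke Theorem~\ref{Thm-DO-4th}: under $\{z,u,p\}\subset C^6(\overline\Omega)$ the scheme (\ref{h-KKT-Trap-4}) is fourth-order accurate in $u_h$ and $z_h$ with respect to $\|\cdot\|_\infty$, and since (\ref{h-KKT-Trap-4-reg}) returns the same $u_h$ and $z_h$, the claimed accuracy follows immediately. (The adjoint $p_h=M_h\hat p_h$ is magnified by a factor $O(\gamma)=O(h^{-2})$ relative to the convergent $\hat p_h$, which is exactly why the theorem asserts nothing about $p_h$.) I expect the only genuinely non-routine ingredient to be the commutativity of $F_h$ with $\Delta_h$, which is what makes the cancellation of $M_h$ legitimate; once that is in hand the rest is linear algebra plus an appeal to Theorem~\ref{Thm-DO-4th}. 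If one preferred to stay closer to the style of Appendix~\ref{App-DO-4th}, an alternative is a direct truncation-error-plus-stability estimate using Lemma~\ref{Lem1}(ii), but the reformulation above is shorter and makes transparent why the regularization is harmless for $u_h$ and $z_h$.
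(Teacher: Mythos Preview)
Your proposal is correct and follows essentially the same route as the paper: both arguments hinge on the commutativity identities $F_h(I_h-\gamma\Delta_h)=(I_h-\gamma\Delta_h)F_h$ and $R_h(I_h-\gamma\Delta_h)=(I_h-\gamma\Delta_h)R_h$ (recorded in (\ref{FR2})) together with the invertibility of $I_h-\gamma\Delta_h$, and both conclude that the regularized system produces the \emph{same} $z_h,u_h$ as an already-analyzed scheme. The only cosmetic difference is that you introduce the rescaled multiplier $\hat p_h=M_h^{-1}p_h$ and land on the three-equation (DO-4-Trap) system (\ref{h-KKT-Trap-4}), then invoke Theorem~\ref{Thm-DO-4th}, whereas the paper eliminates $p_h$ outright, factors $M_h$ to the left of the second equation, cancels it, and lands on the reduced two-equation OD-4 system (\ref{KKT-h-4-B}), invoking Theorem~\ref{Thm-KKT-h-4}; since Theorem~\ref{Thm-DO-4th} is itself proved by reducing to that same two-equation system, the two paths are equivalent.
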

\begin{proof}
See Appendix  \ref{App-DO-4th-Reg} for the detailed proof.
\end{proof} 
}
In summary, we report in Table \ref{T2} the convergence results of the above discussed schemes. 
% $H_1$ semi-norm regularization appraoch for different combination of schemes.
Compared to Table \ref{T1}, we have successfully achieved the expected convergence and optimal order of accuracy for all schemes, 
without enforcing the stringent condition of commutativity between the OD and DO approaches.
%Especially, the new adjoint state $p_h$ with regularization is significantly different from the original one without regularization.

\begin{table}[H]
\centering
\caption{The convergence of the $H_1$-regularized DO approach with different combination of schemes.}
 \begin{tabular}{|c|cc|c|c|c|c|}
 \hline 
 Regularized DO Schemes& Objective  & State PDE & Commutative & Convergent & Order & Proof\\
  \hline 
  (DO-2-Simp-Reg) or (\ref{h-KKT-Simp-reg})& Simpson & Second-order & \ding{55} & \ding{51} &$O(h^2)$& Open\\ \hline
  (DO-4-Simp-Reg) or  (\ref{h-KKT-Simp-4-reg}) & Simpson & Fourth-order & \ding{55} & \ding{51}& $O(h^4)$& Open\\ \hline
  (DO-2-Trap-Reg) or (\ref{h-KKT-Trap-reg})& Trapezoidal & Second-order & \ding{55} &  \ding{51} & $O(h^2)$&Thm \ref{Thm-DO-2th-Reg}\\ \hline
   (DO-4-Trap-Reg) or (\ref{h-KKT-Trap-4-reg})& Trapezoidal & Fourth-order &\ding{55} &  \ding{51} & $O(h^4)$&Thm \ref{Thm-DO-4th-Reg}\\ \hline
\end{tabular}
\label{T2}
\end{table}

It is also worthwhile to point out that the optimal value of parameter $\gamma$ in our introduced regularization terms have been found:  
$\gamma=h^{-2}$ and $\gamma=1$ for the fourth-order scheme and second-order scheme, respectively, which makes the proposed regularization approach to be essentially parameter-free.
{Here, we note that the chosen value of $\gamma$ (or the regularization terms)  will not vanish in the limit case $h\to 0$, which hence indeed leads to a different discrete optimization problem.
Nevertheless, this does not eliminate the possibility that its discrete minimizer converges
to the unique minimizer of the original continuous optimization problem.
It is also possible that we only achieve the convergence of the desired optimal state $z_h$ and control $u_h$,
without attaining any meaningful approximation accuracy in the corresponding adjoint state $p_h$.
Notice the actual values of $p_h$ are very often of less importance in many applications. 
Although we have tried to justify our choice of such regularization terms in the above discussion,
a complete understanding of the observed convergence in the regularized schemes 
(\ref{h-KKT-Simp-reg}) and (\ref{h-KKT-Simp-4-reg}) requires further analysis,
which is beyond our reach at this moment and hence left as an open problem to the larger community.
In contrast to the trapezoidal rule based schemes (\ref{h-KKT-Trap-reg}) and (\ref{h-KKT-Trap-4-reg}),
the encountered diagonal matrix $Q_h$ does not commute with the involved operators $\Delta_h$ and $F_h$,
which presents difficulty in their convergence analysis as conducted in Theorems \ref{Thm-DO-2th-Reg} and \ref{Thm-DO-4th-Reg}.
It seems difficult to prove the desired convergence within our current methodology framework,
and better numerical analysis techniques are needed to fully explain the observed numerical results.}
%Next, we will illustrate the above regularization algorithms in several 1D and 2D numerical experiments.
{
\begin{remark} \label{DO-2-Simp-Reg-Open} 
To provide more insights regarding the difficulty in the convergence analysis of the regularized Simpson rule based second-order scheme (\ref{h-KKT-Simp-reg}), we rewrite it into (after eliminating $p_h$ and multiplying by $(Q_h-\gamma \Delta_h)^{-1}$ from left)
\eq \label{h-KKT-Simp-reg-reduced} 
 \left\{
\begin{aligned} 
-\Delta_h z_h-u_h&=f_h,\\
-\alpha(Q_h-\gamma \Delta_h)^{-1}\Delta_h (Q_h-\gamma \Delta_h) u_h + z_h&=  g_h,
\end{aligned} \right.
\ee 
where the key matrix can be reformulated as
$$(Q_h-\gamma \Delta_h)^{-1}\Delta_h (Q_h-\gamma \Delta_h)=
\Delta_h \underbrace{\left[\Delta_h^{-1}(Q_h-\gamma \Delta_h)^{-1}\Delta_h (Q_h-\gamma \Delta_h) \right]}_{S_h}=:\Delta_h S_h.$$ 
When the above scheme (\ref{h-KKT-Simp-reg-reduced}) is compared with the reduced OD scheme (\ref{KKT-h}) (after eliminating $p_h$) 
\eq \label{KKT-h-reduced} 
  \left\{
\begin{aligned} 
-\Delta_h z_h-u_h&= f_h,\\
-\alpha \Delta_h u_h+z_h&= g_h,
\end{aligned} \right.
\ee
the defined matrix $S_h$ acts like a \textit{filter} in controlling the spurious checkerboards oscillations in $u_h$.
Notice that $Q_h$ does not commute with $\Delta_h$, i.e., $Q_h\Delta_h\ne \Delta_h Q_h$,
but we do observe that $(Q_h\Delta_h)^\T=\Delta_h Q_h$.
Our numerical results indicate that $S_h$ does not reduce the order of accuracy in approximations.
Here, the diagonal matrix $Q_h$ is bounded by two positive scalar identity matrices
and hence $S_h$ behaves like an identity matrix.
 
\end{remark}
 }
\section{Numerical experiments} \label{s_num}
In this section, we provide several numerical examples to demonstrate the effectiveness of our 
proposed regularization methods. 
All simulations are implemented using MATLAB R2016a on a laptop PC 
and the optimality KKT systems are solved by MATLAB's built-in backslash sparse direct solver.
All approximation errors are measured in the infinity norm in comparison with the known exact solution (if available).

 \textbf{Example 1.} We first consider a simple 1D example. 
 Let $\alpha=0.1$ and choose $f, g$ such that the exact optimal solution reads
$$  z(x)=\sin(\pi x), \quad u(x)=\sin(2\pi x)/\alpha.$$
In Figure \ref{Trap_Simp0_1D}, we plot the computed control by using the second-order scheme for state equation with trapezoidal and Simpson's rule for objective functional approximation, respectively.
In comparison to the trapezoidal rule (left plot), we observe strong spurious oscillations in the obtained approximation by Simpson's rule (right plot).
The computed control by the second-order scheme with \textit{regularized} trapezoidal and Simpson's rule are plotted in Figure \ref{Trap_Simp0_1D_Reg}, respectively.
With our proposed regularization term, the spurious oscillations of Simpson's rule have been eliminated and the obtained approximations by both trapezoidal and Simpson's rule are indistinguishable. 
Furthermore, the added regularization does not degrade the approximation accuracy of the trapezoidal rule.
The case with the fourth-order scheme gives very similar results and we hence choose not to duplicate the plots for simplicity.
\begin{figure}[H]
\centering{\resizebox{1\textwidth}{!}{\includegraphics{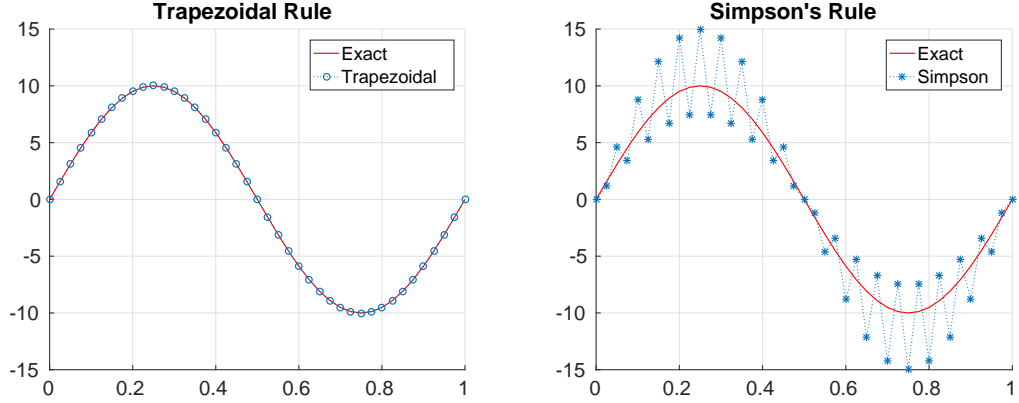}}} 
\caption{Computed $u_h$ by the second-order scheme with trapezoidal and Simpson's rule ($h=1/40$).}
\label{Trap_Simp0_1D}
\end{figure} 
\begin{figure}[H]
\centering{\resizebox{1\textwidth}{!}{\includegraphics{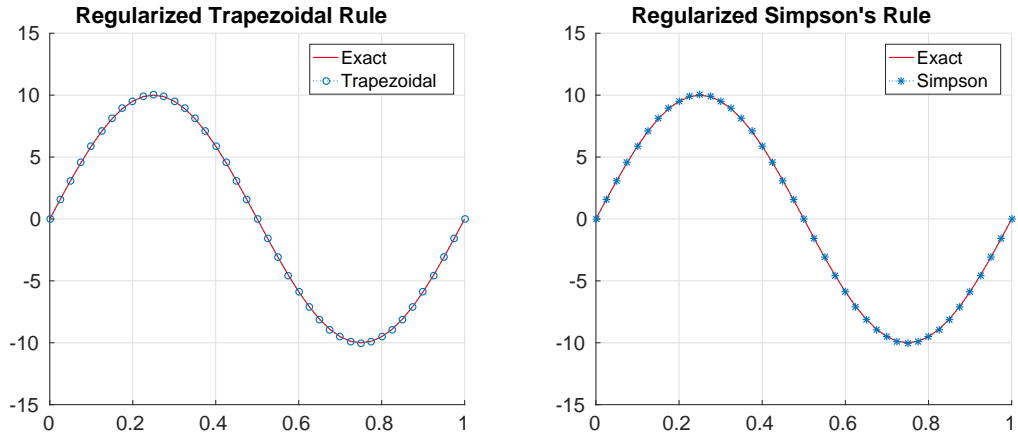}}} 
\caption{Computed $u_h$ by the second-order scheme with \textit{regularized} trapezoidal and Simpson's rule ($h=1/40$).}
\label{Trap_Simp0_1D_Reg}
\end{figure} 

 \textbf{Example 2.} Let $\alpha=0.1$ and choose $f, g$ such that the exact optimal solution reads
\[
  z(x,y)=\sin(\pi x)\sin(\pi y),\quad u(x,y)=\sin(2\pi x)\sin(2\pi y)/\alpha.
\]
In Figure \ref{Trap_vs_Simp-reg}, we plot the computed control by the second-order scheme with regularized trapezoidal and Simpson's rule.
Compared to Figure \ref{Trap_vs_Simp} that is associated to the control without regularization, the spurious oscillations caused by Simpson's rule have been clearly eliminated.
To validate our theoretical analysis, we report in Table \ref{T1CFD-reg} the approximation errors and observed second-order of accuracy of computed optimal control 
with both trapezoidal and Simpson's rules. Notice the approximation errors by Simpson's rule without regularization seem to be of $O(1)$ and do not converge at all.
Similar results from the fourth-order scheme are shown in Figure \ref{Trap_vs_Simp-4-reg} and reported in Table \ref{T1CFD-4-reg},
where the expected fourth-order accuracy of Simpson's rule is observed after adding the regularization term.
In all the cases, introducing the proposed regularization term eliminates the spurious oscillations, while leading to the expected accuracy of numerical schemes. 
%without any significant adverse effects on those schemes that are anticipated to be convergent without any regularization.
We also point out that the observed fourth-order accuracy obtained by the fourth-order scheme (\ref{h-KKT-Trap-4}) with the trapezoidal rule 
relies on the coincidental commutative property between $F_h$ and $R_h$, which may not be valid with other discretization schemes.
 \begin{figure}[H]

\centering{\resizebox{0.45\textwidth}{!}{\includegraphics{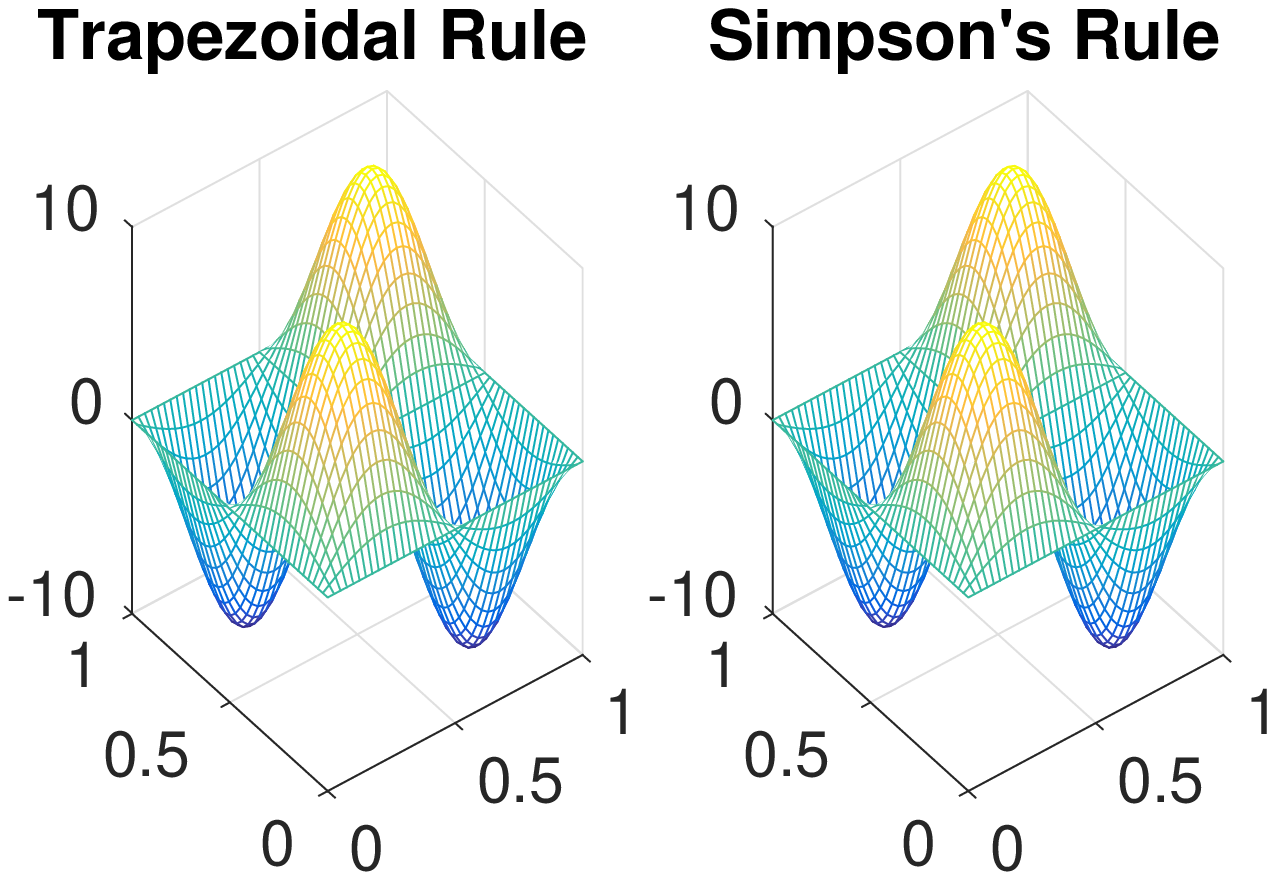}}}
\centering{\resizebox{0.45\textwidth}{!}{\includegraphics{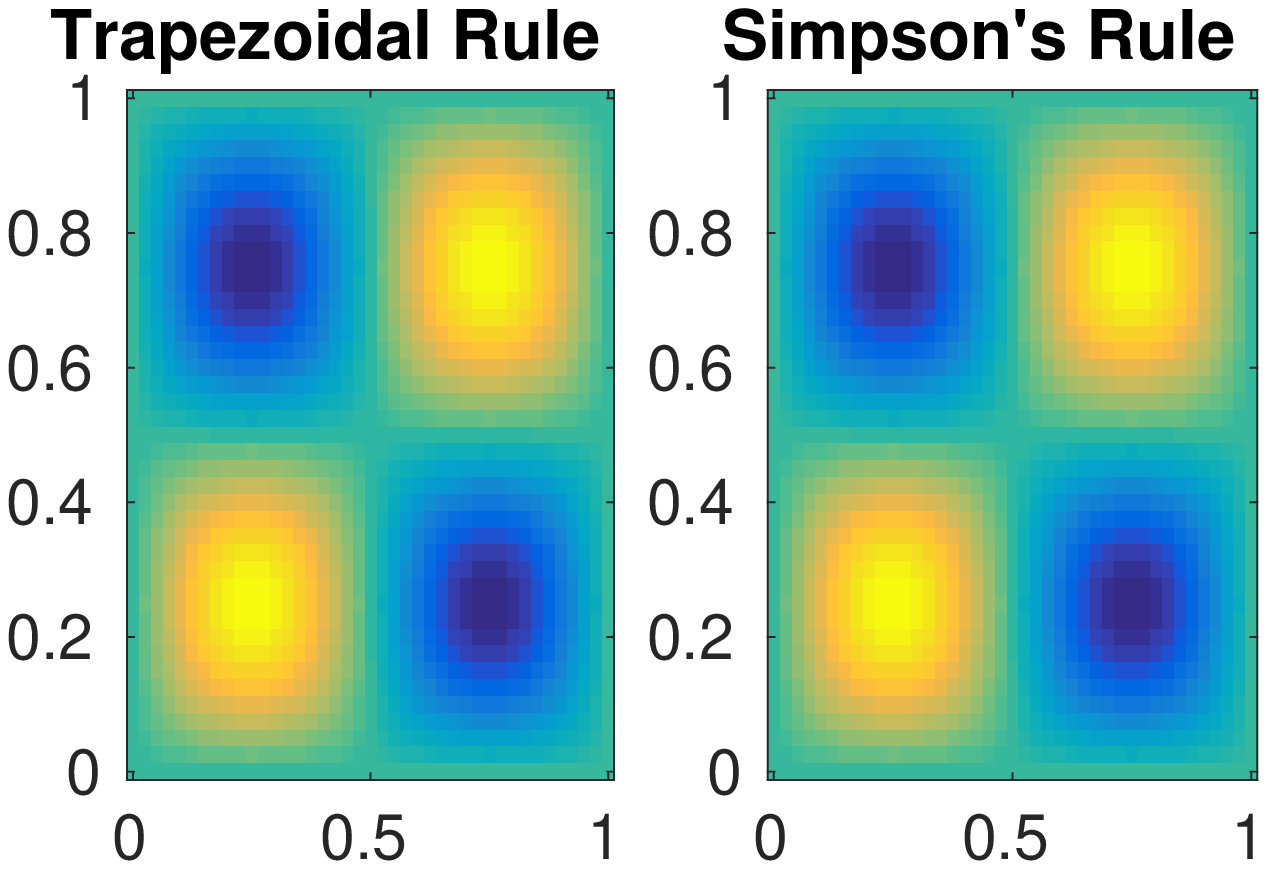}}}
\caption{Computed $u_h$ by the second-order scheme with regularized trapezoidal and Simpson rule ($h=1/40$).}
\label{Trap_vs_Simp-reg}
\end{figure}

\begin{table}[H]
\centering
\caption{The errors of control ($u_h$) by the second-order scheme without and with regularization  (Ex. 2).}
\begin{tabular}{|c|cc|cc|cc|cc|}\hline 
&\multicolumn{2}{c}{Trapezoidal}&\multicolumn{2}{|c|}{Reg. Trapezoidal} 
&\multicolumn{2}{c}{{Simpson}}&\multicolumn{2}{|c|}{Reg. Simpson}\\
\hline
$h$&Error&Order&Error&Order&Error&Order&Error&Order\\
\hline
1/20&	8.3e-02&	  &    8.3e-02&	  &   1.1e+01&	 --&	  8.2e-02&	  \\
1/40&	2.1e-02&	 2.0&    2.1e-02&	 2.0&   1.2e+01&	 --&	  2.3e-02&	 1.9\\
1/60&	9.2e-03&	 2.0&    9.2e-03&	 2.0&   1.2e+01&	 --&	  9.9e-03&	 2.0\\
1/80&	5.2e-03&	 2.0&    5.2e-03&	 2.0&   1.2e+01&	 --&	  5.6e-03&	 2.0\\
1/100&	3.3e-03&	 2.0&    3.3e-03&	 2.0&   1.2e+01&	 --&	  3.6e-03&	 2.0\\
1/200&	8.3e-04&	 2.0&    8.3e-04&	 2.0&   1.2e+01&	 --&	  9.0e-04&	 2.0\\
\hline
\end{tabular}
\label{T1CFD-reg}
 \end{table}  
 
 \begin{figure}[H]

\centering{\resizebox{0.45\textwidth}{!}{\includegraphics{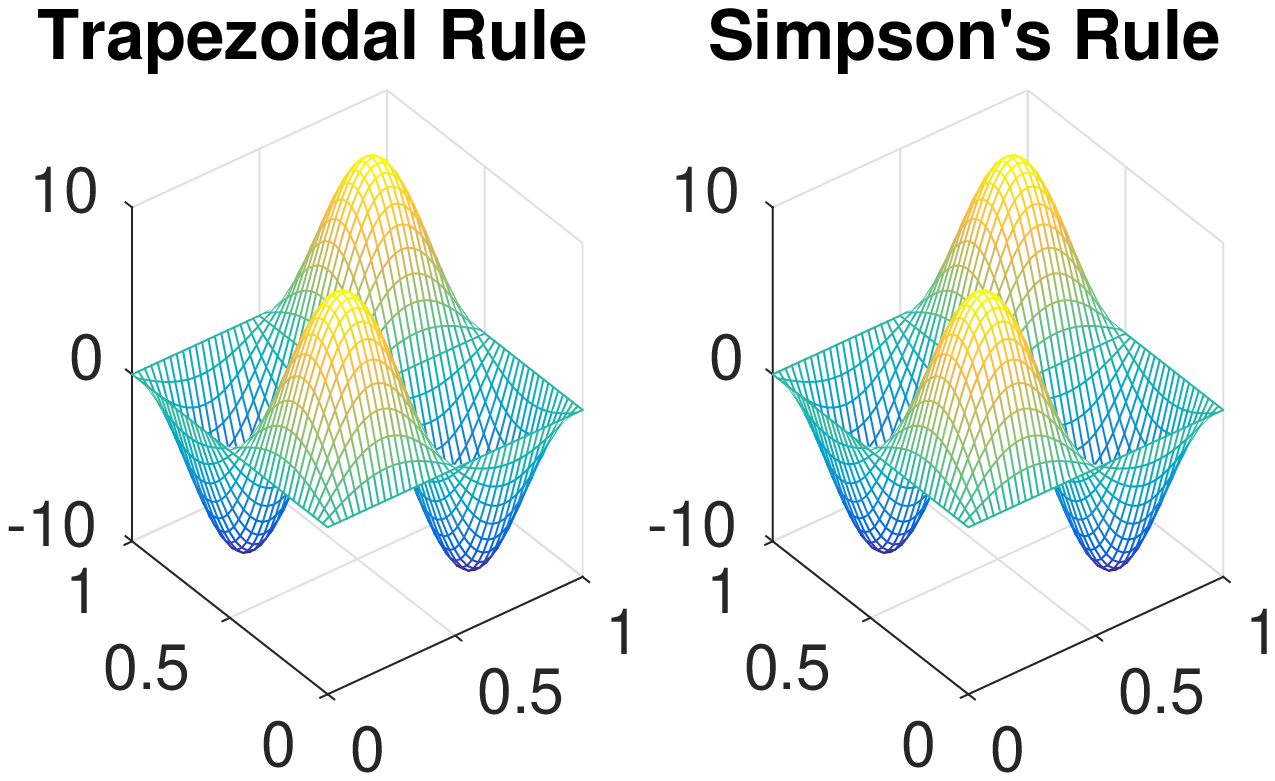}}}
\centering{\resizebox{0.45\textwidth}{!}{\includegraphics{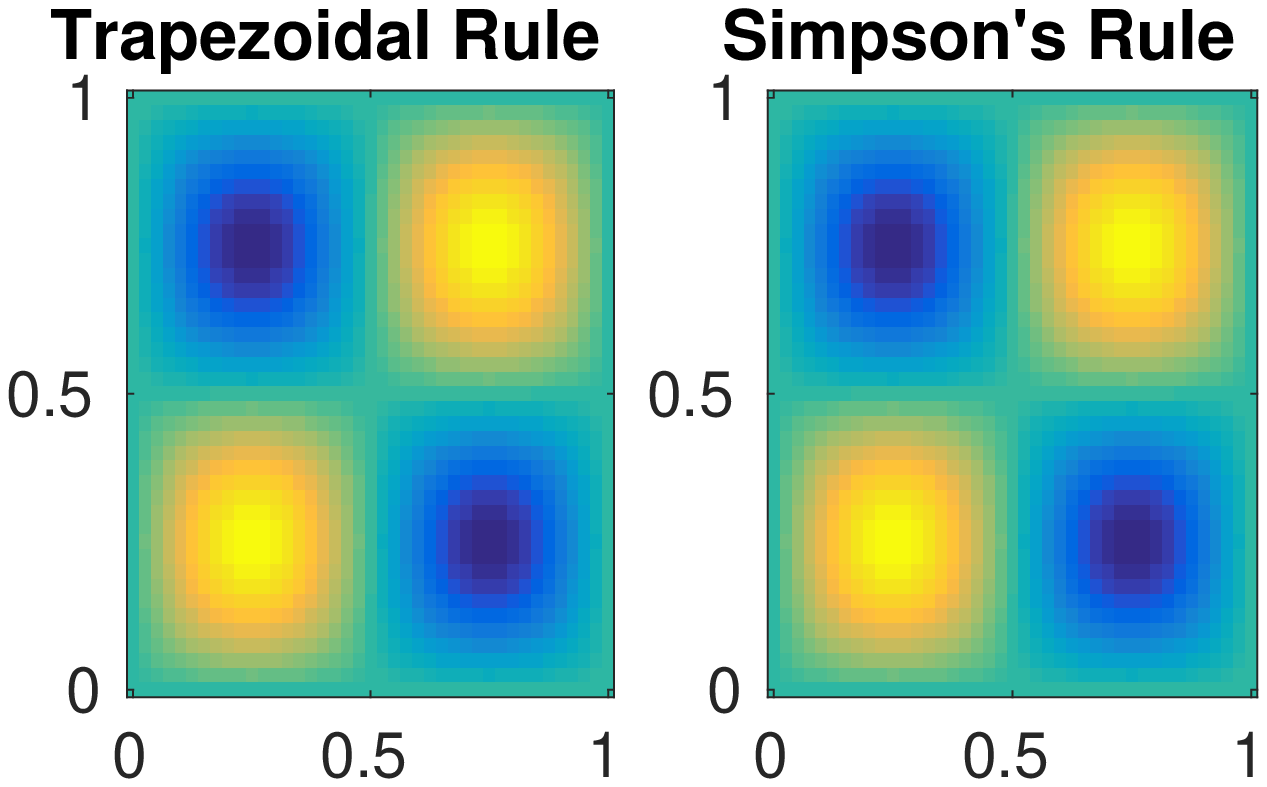}}}
\caption{Computed $u_h$ by the fourth-order scheme with regularized trapezoidal and Simpson rule  ($h=1/40$).}
\label{Trap_vs_Simp-4-reg}
\end{figure} 

\begin{table}[H] 
\centering
\caption{The errors of control ($u_h$) by the fourth-order scheme without and with regularization  (Ex. 2).}
\begin{tabular}{|c|cc|cc|cc|cc|}\hline 
&\multicolumn{2}{c}{Trapezoidal}&\multicolumn{2}{|c|}{Reg. Trapezoidal} 
&\multicolumn{2}{c}{{Simpson}}&\multicolumn{2}{|c|}{Reg. Simpson}\\
\hline
$h$&Error&Order&Error&Order&Error&Order&Error&Order\\
\hline
1/20&	 2.7e-04&	  &          2.7e-04&	  &    1.1e+01&	 --&	    2.9e-04&	  \\
1/40&	 1.7e-05&	 4.0&      1.7e-05&	 4.0&    1.2e+01&	 --&      1.8e-05&	 4.0\\
1/60&	 3.3e-06&	 4.0&      3.3e-06&	 4.0&    1.2e+01&	 --&	    3.6e-06&	 3.9\\
1/80&	 1.1e-06&	 4.0&     1.1e-06&	 4.0&    1.2e+01&	 --&      1.1e-06&	 4.0\\
1/100&	 4.3e-07&	 4.0&     4.3e-07&	 4.0&    1.2e+01&	 --&	    4.7e-07&	 4.0\\
1/200&	 2.7e-08&	 4.0&      2.7e-08&	 4.0&    1.2e+01&	 --&      2.9e-08&	 4.0\\
\hline
\end{tabular}
\label{T1CFD-4-reg}
 \end{table}

 \textbf{Example 3.} Let $\alpha=1$ and choose $f, g$ such that the exact optimal solution reads
\[
  z(x,y)=\sin(2\pi x)\sin(2\pi y)e^{x+y},\quad u(x,y)=\sin(4\pi x)\sin(4\pi y)e^{x-y}/\alpha.
\]
We report in Table \ref{T2CFD-reg} the approximation errors and order of accuracy of computed optimal control
by the second-order scheme without and with regularization. Again, the Simpson rule without regularization does not lead to convergent approximations,
while the regularized Simpson rule delivers a satisfactory second-order accuracy.
{Similar results with the fourth-order scheme are given in Table \ref{T2CFD-4-reg},
showing the fourth-order accuracy is successfully attained with regularization.}
%However, for this example we observe only a second-order accuracy, 
%although the absolute approximation errors are about four times smaller than that of the second-order scheme.

\begin{table}[H]
\centering
\caption{The errors of control ($u_h$) by the second-order scheme without and with regularization  (Ex. 3).}
\begin{tabular}{|c|cc|cc|cc|cc|}\hline 
&\multicolumn{2}{c}{Trapezoidal}&\multicolumn{2}{|c|}{Reg. Trapezoidal} 
&\multicolumn{2}{c}{{Simpson}}&\multicolumn{2}{|c|}{Reg. Simpson}\\
\hline
$h$&Error&Order&Error&Order&Error&Order&Error&Order\\
\hline 
1/20&	   6.6e-02&	 --	& 6.6e-02&	 --	&   2.1e+00&	 --	&      6.6e-02&	 --   \\
1/40&	   1.6e-02&	 2.0	&      1.6e-02&	 2.0	&   2.4e+00&	 --	&      1.6e-02&	 2.0   \\
1/60&	   7.4e-03&	 2.0	&      7.4e-03&	 2.0	&   2.5e+00&	 --	&      7.3e-03&	 2.0   \\
1/80&	   4.2e-03&	 2.0	&      4.2e-03&	 2.0	&   2.6e+00&	 --	&      4.1e-03&	 2.0   \\
1/100&	   2.7e-03&	 2.0	&      2.7e-03&	 2.0	&   2.6e+00&	 --	&      2.7e-03&	 1.9   \\
1/200&	   6.7e-04&	 2.0	&      6.7e-04&	 2.0	&   2.7e+00&	 --	&      6.7e-04&	 2.0   \\
\hline
\end{tabular}
\label{T2CFD-reg}
 \end{table}  
  
\begin{table}[H] 
\centering
\caption{The errors of control ($u_h$) by the fourth-order scheme without and with regularization  (Ex. 3).}
\begin{tabular}{|c|cc|cc|cc|cc|}\hline 
&\multicolumn{2}{c}{Trapezoidal}&\multicolumn{2}{|c|}{Reg. Trapezoidal} 
&\multicolumn{2}{c}{{Simpson}}&\multicolumn{2}{|c|}{Reg. Simpson}\\
\hline
$h$&Error&Order&Error&Order&Error&Order&Error&Order\\
\hline 
%1/10&	  1.3e-02&	 NaN&      1.3e-02&	 NaN&    3.9e-01&	 NaN&	    1.3e-02&	 NaN      \\
1/20&	  9.2e-04&	  &      9.2e-04&	  &    2.0e+00&	 --&	    9.1e-04&	        \\
1/40&	  5.8e-05&	 4.0&      5.8e-05&	 4.0&    2.4e+00&	 --&	    5.8e-05&	 4.0      \\
1/60&	  1.2e-05&	 4.0&      1.2e-05&	 4.0&    2.5e+00&	 --&	    1.2e-05&	 4.0      \\
1/80&	  3.7e-06&	 4.0&      3.7e-06&	 4.0&    2.6e+00&	 --&	    3.7e-06&	 4.0      \\
1/100&	  1.5e-06&	 4.0&      1.5e-06&	 4.0&    2.6e+00&	 --&	    1.5e-06&	 4.0      \\
1/200&	  9.5e-08&	 4.0&      9.5e-08&	 4.0&    2.7e+00&	 --&	    9.5e-08&	 4.0      \\
\hline
\end{tabular}
\label{T2CFD-4-reg}
 \end{table}

 \textbf{Example 4.}
 In this example, we illustrate the performance of our proposed algorithm with the case of a non-attainable discontinuous target function.
Let $\alpha=10^{-4}$, $f\equiv 0$ and choose 
\[
 g(x,y)=\begin{cases} \sin(\pi x)\sin(\pi y), & \mbox{if}\ |x-\frac{1}{2}|\ge \frac{1}{4}\ \mbox{and}\ |y-\frac{1}{2}| \ge \frac{1}{4}, \\
0 , &  \mbox{otherwise}. \end{cases}
\]
Notice the lower regularity of the target function $g$ will bring some difficulties to any standard finite difference schemes,
since they usually require higher regularity of the solutions for obtaining the formal order of accuracy in infinity norm.
To avoid confusing or misleading readers, we choose not to report the approximation errors and order of accuracy as in Example 2 because the exact optimal solution is not available. 
Instead, we compare our second-order and fourth-order algorithms by visualizing the obtained approximations
in Figure \ref{2nd_Simp_Reg_ex2} and Figure \ref{4th_Simp_Reg_ex2}, respectively. 
Clearly, the obtained approximations of optimal control with regularization are free of the numerical oscillations observed in the case without regularization. 

 \begin{figure}[H]
\centering{\resizebox{0.8\textwidth}{!}{\includegraphics{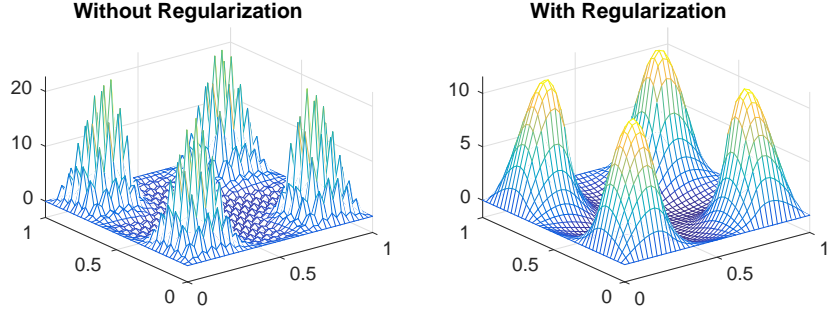}}} 
\caption{Computed $u_h$ by the second-order scheme with Simpson's rule (without and with regularization, $h=1/40$).}
\label{2nd_Simp_Reg_ex2}
\end{figure} 

 \begin{figure}[H]
\centering{\resizebox{0.8\textwidth}{!}{\includegraphics{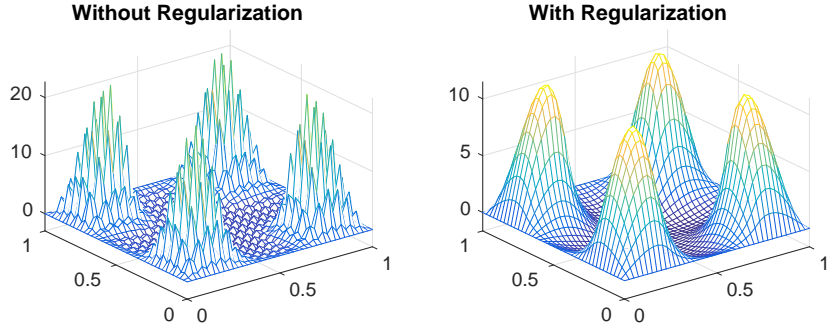}}} 
\caption{Computed $u_h$ by the fourth-order scheme with Simpson's rule (without and with regularization, $h=1/40$).}
\label{4th_Simp_Reg_ex2}
\end{figure} 

 \begin{figure}[H]
\centering{\resizebox{0.8\textwidth}{!}{\includegraphics{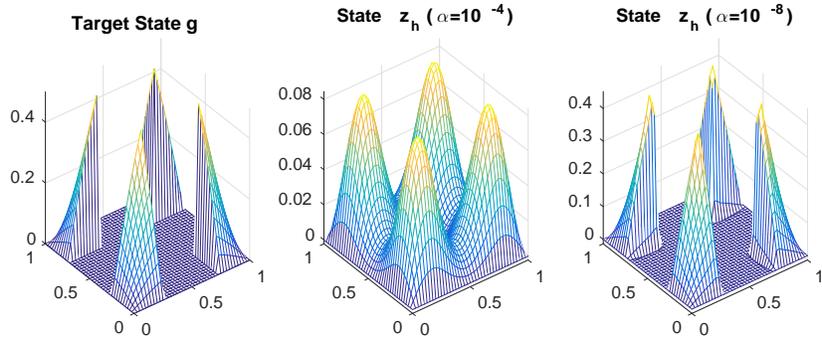}}} 
\caption{Computed $z_h$ by the fourth-order scheme with regularized Simpson's rule ($h=1/40$).}
\label{4th_Simp_Reg_ex2_States}
\end{figure} 
In Figure \ref{4th_Simp_Reg_ex2_States}, we plot the target state $g$ and computed optimal state $z_h$ with 
$\alpha=10^{-4}$ and $\alpha=10^{-8}$, respectively. 
As we expected, a smaller $\alpha$ leads to better tracking property in approaching the optimal state $z_h$ to the target state $g$,
but at the cost of solving a discrete KKT system with a larger condition number.
\section{Conclusions} \label{s_end}
Discretize-then-optimize algorithms are widely used in numerically solving PDE-constrained optimization and optimal control
due to their relatively straightforward implementation by maximizing the reuse of existing numerical optimization algorithms.
However, the subtle interaction between discretization and optimization complicates the rigorous convergence analysis
of those non-commutative discretize-then-optimize algorithms, especially for the cases with high-order discretizations.
By working with a prototype linear-quadratic elliptic PDE optimal control problem,
we compared and analyzed the convergence of both optimize-then-discretize and discretize-then-optimize algorithms with several second-order and fourth-order finite difference discretizations.
Several new theoretical conclusions on the convergence of both OD and DO algorithms are obtained with some elementary error analysis techniques.

The DO algorithms may lead to different discrete KKT systems from the OD algorithms, 
but they may still be convergent in the primal variables provided the obtained discretization scheme remains stable and consistent.
To resolve the observed problematic numerical oscillations when approximating the objective functional with Simpson's rule,
we proposed to penalize the non-smoothness of both terms in the objective functional by adding a well-chosen discrete $H_1$ semi-norm regularization term.
Numerical experiments show the effectiveness of our proposed regularization techniques in eliminating spurious numerical oscillations.
Our future work includes the application of our proposed regularization techniques to cases with control and/or state constraints \cite{Casas_1986,Bergounioux1999,Borzi2005}
and {with other popular discretization schemes in solving more challenging convection-diffusion control problems.}
%, such as finite element method and discontinuous Galerkin methods.
  
\section*{Acknowledgments}
The authors would like to thank the three anonymous referees for their valuable comments and detailed
suggestions that have greatly contributed to improving the quality of the paper.
The authors also would like to thank Dr. William W. Hager for his revision suggestions that lead to better readability and Dr. Buyang Li for his constructive comments on convergence analysis.
JL would like to thank Drs Qiang Du and Xiaochuan Tian for their helpful discussion and suggestions, which was made possible by the travel support from NSF DMS-1642545 grant, namely the
NSF-CBMS Conference ``Nonlocal Dynamics: Theory, Computation and Applications'', held at Illinois Institute of Technology, Chicago, during June 4-9, 2017.
JL also want to thank Dr. Li-yeng Sung for providing comments regarding
the regularity assumptions. 

\begin{appendices}

\section{Proof of Theorem \ref{Thm-KKT-h}: the second-order accuracy of the scheme (\ref{KKT-h}).} 
\label{App-OD-2nd}
\begin{proof}

Under the given regularity assumptions, the exact solution triple $(z,u,p)$ of (\ref{KKT}) satisfies the system
\eq \label{KKT-exact} 
 \left\{
\begin{aligned} 
-\Delta_h z-u&= f_h+F,\\
-\Delta_h p+z&= g_h+G,\\
\alpha u-p &=0,
\end{aligned} \right.
\ee
with the truncation errors terms $\|F\|\le Ch^2$ and $\|G\|\le Ch^2$ for some generic positive constant $C$.

Let $e_z=z-z_h$, $e_u=u-u_h$, and $e_p=p-p_h$ denote the approximation errors on all mesh grid points.
Then the difference between (\ref{KKT-exact}) and (\ref{KKT-h}) gives
\eq \label{KKT-h-error} 
 \left\{
\begin{aligned} 
-\Delta_h e_z-e_u&= F,\\
-\Delta_h e_p+e_z&= G,\\
\alpha e_u-e_p &=0,
\end{aligned} \right.
\ee
The discrete inner product of the first equation in (\ref{KKT-h-error}) and $(-\Delta_h e_z)$ yields
\eq \label{KKT-h-error-1}  
\begin{aligned} 
\|\Delta_h e_z\|^2-(\nabla_h e_u,\nabla_h e_z)&= (-F,\Delta_h e_z). 
\end{aligned} 
\ee
Similarly, the discrete inner product of the second equation in (\ref{KKT-h-error}) and $(-\Delta_h e_p)$ yields
\eq \label{KKT-h-error-2}  
\begin{aligned} 
\|\Delta_h e_p\|^2+(\nabla_h e_z,\nabla_h e_p)&= (-G,\Delta_h e_p).
\end{aligned} 
\ee
Also, the discrete inner product of the third equation in (\ref{KKT-h-error}) and $(-\Delta_h e_z)$ yields
\eq \label{KKT-h-error-3}  
\begin{aligned} 
\alpha (\nabla_h e_u,\nabla_h e_z)=(\nabla_h e_p,\nabla_h e_z).
\end{aligned} 
\ee
By using (\ref{KKT-h-error-3}), the sum of $\alpha\times$(\ref{KKT-h-error-1}) and (\ref{KKT-h-error-2}) leads to
\eq \label{KKT-h-error-4}  
\begin{aligned} 
\alpha \|\Delta_h e_z\|^2+ \|\Delta_h e_p\|^2&= \alpha (-F,\Delta_h e_z)+(-G,\Delta_h e_p)\\
&\le  \alpha \|F\| \|\Delta_h e_z\|+\|G\| \|\Delta_h e_p\| \\
&\le \alpha C h^2 \|\Delta_h e_z\|+ Ch^2 \|\Delta_h e_p\| \\
&\le \alpha (C^2h^4+\frac{1}{4} \|\Delta_h e_z\|^2)+(C^2h^4+\frac{1}{4} \|\Delta_h e_p\|^2)\\
&= (\alpha+1)C^2h^4+\frac{\alpha}{4} \|\Delta_h e_z\|^2+\frac{1}{4} \|\Delta_h e_p\|^2.
\end{aligned} 
\ee
By combining the like terms, we further obtain
\eq \label{KKT-h-error-5}  
\begin{aligned} 
\alpha \|\Delta_h e_z\|^2+ \|\Delta_h e_p\|^2\le \frac{4}{3}(\alpha+1)C^2 h^4.
\end{aligned} 
\ee
Due to the discrete Sobolev embedding inequalities (see Lemma \ref{Lem1})
$$\|e_z\|_\infty\le C \|\Delta_h e_z\|\quad\mbox{and}\quad \|e_p\|_\infty\le C \|\Delta_h e_p\|,$$
we get
\eq \label{KKT-h-error-6}  
\begin{aligned} 
  \|e_z\|_\infty \le C h^2,\ \|e_p\|_\infty \le C h^2,\ \mbox{and}\ \|e_u\|_\infty=\frac{1}{\alpha}\|e_p\|_\infty \le C h^2.
\end{aligned} 
\ee
This concludes the second-order accuracy in the infinity norm of the finite difference scheme (\ref{KKT-h}) . 
\end{proof}

\begin{remark} \label{StabilityA}
We also point out that the above convergence proofs are mainly based on summation by parts, the discrete analogue of integration by parts.
In some cases, it is more convenient to show the discretization scheme is both consistent and stable under certain norm, which will automatically imply convergence
via the Lax-Richtmyer equivalence theorem. 
The consistency is relatively easier to verify through analyzing the truncation errors, 
and the stability can often be proved by estimating the eigenvalues/singular values of the underlying coefficient matrix, as shown in the following.

We first rewrite the approximation errors system (\ref{KKT-h-error}) into
\eq \label{KKT-h-system-err} 
L_h \bmt  e_p\\ e_z \emt:= 
\bmt 
I_h /\alpha & \Delta_h\\
-\Delta_h & I_h
\emt
\bmt  e_p\\ e_z \emt
=
\bmt -F \\ G \emt .
\ee
Let $\CH(L_h):=(L_h+L_h^\T)/2$ denotes the Hermitian part of $L_h$, i.e.,
\[
 \CH(L_h)=\bmt 
I_h /\alpha & 0\\
0 & I_h
\emt.
\]
Let $\sigma_{\min}(\cdot)$ and $\lambda_{\min}(\cdot)$ denote the smallest
singular value and eigenvalue, respectively.  
Then it follows from a well-known singular value inequality {\cite[p. 151]{Horn1994}}
\[
 \sigma_{\min}(L_h)\ge \lambda_{\min}(\CH(L_h))=\min(1/\alpha,1)
\]
that (notice that $\lambda_{\min}(\CH(L_h))>0$)
\begin{align}
 \|L_h^{-1}\|_2=\frac{1}{\sigma_{\min}(L_h)}\le \frac{1}{\lambda_{\min}(\CH(L_h))}=\max(\alpha,1),
\end{align}
which proves the stability of the scheme (\ref{KKT-h}) under the spectral norm (induced by the Euclidean norm).
\end{remark}
\section{Proof of Theorem \ref{Thm-KKT-h-4}: the fourth-order accuracy of the scheme (\ref{KKT-h-4}).} 
\label{App-OD-4th}
\begin{proof}

Under the given regularity assumptions, the exact solution triple $(z,u,p)$ of (\ref{KKT}) satisfies the system
\eq \label{A2:KKT-exact} 
 \left\{
\begin{aligned} 
F_h z-R_h u&= R_h f_h+H,\\
F_h p+R_h z&= R_h g_h+S,\\
\alpha u-p &=0,
\end{aligned} \right.
\ee
with the truncation errors terms $\|H\|\le Ch^4$ and $\|S\|\le Ch^4$ for some generic positive constant $C$.

Let $e_z=z-z_h$, $e_u=u-u_h$, and $e_p=p-p_h$ denote the approximation errors.
Then the difference between (\ref{A2:KKT-exact}) and (\ref{KKT-h-4}) gives
\eq \label{A2:KKT-h-error} 
 \left\{
\begin{aligned} 
F_h e_z-R_h e_u&= H,\\
F_h e_p+R_h e_z&= S,\\
\alpha e_u-e_p &=0,
\end{aligned} \right.
\ee
The discrete inner product of the first equation in (\ref{A2:KKT-h-error}) and $(F_h e_z)$ yields
\eq \label{A2:KKT-h-error-1}  
\begin{aligned} 
\|F_h e_z\|^2-(R_h e_u,F_h e_z)&= (H,F_h e_z). 
\end{aligned} 
\ee
Similarly, the discrete inner product of the second equation in (\ref{A2:KKT-h-error}) and $(F_h e_p)$ yields
\eq \label{A2:KKT-h-error-2}  
\begin{aligned} 
\|F_h e_p\|^2+(R_h e_z,F_h e_p)&= (S,F_h e_p).
\end{aligned} 
\ee
Also, the discrete inner product of the third equation in (\ref{A2:KKT-h-error}) and $(R_h F_h e_z)$ yields
\eq \label{A2:KKT-h-error-3}  
\begin{aligned} 
\alpha (R_h e_u,F_h e_z)=\alpha (e_u,R_h F_h e_z)=(e_p,R_h F_h e_z)=(e_p, F_h R_h e_z)=(F_h e_p, R_h e_z)=(R_h e_z,F_h e_p),
\end{aligned} 
\ee
where we have used the facts that $F_h$ and $R_h$ are symmetric and commutative, i.e., $F_h R_h=R_h F_h$.
In fact, the commutativity between $F_h$ and $R_h$ can be directly verified from their matrix expressions
$$
 F_h=-\Delta_h -\frac{h^2}{6} (A\otimes A)\quad \mbox{and}\quad R_h=I_h +\frac{h^2}{12} \Delta_h .
$$ 
Note that 
\[
 F_hR_h=-\Delta_h-\frac{h^2}{12} \Delta_h^2- \frac{h^2}{6} (A\otimes A)-\frac{h^4}{72}{(A\otimes A)\Delta_h}
\]
and
\[
 R_h F_h=-\Delta_h -\frac{h^2}{6} (A\otimes A)-\frac{h^2}{12} \Delta_h^2-\frac{h^4}{72}{\Delta_h (A\otimes A)},
\]
which implies $F_h R_h=R_h F_h$ based on the following two identities
\[
 (A\otimes A)\Delta_h=-(A\otimes A)\left((I \otimes A)+(A\otimes I)\right)
 =-\left((A \otimes A^2)+(A^2\otimes A)\right)
\]
and
\[
 \Delta_h (A\otimes A)=-\left((I \otimes A)+(A\otimes I)\right) (A\otimes A)=
 -\left((A \otimes A^2)+(A^2\otimes A)\right).
\]
By using (\ref{A2:KKT-h-error-3}), the sum of $\alpha\times$(\ref{A2:KKT-h-error-1}) and (\ref{A2:KKT-h-error-2}) leads to
\eq 
\begin{aligned} 
\alpha \|F_h e_z\|^2+ \|F_h e_p\|^2&= \alpha (H,F_h e_z)+(S,F_h e_p)\\
&\le  \alpha \|H\| \|F_h e_z\|+\|S\| \|F_h e_p\| \\
&\le \alpha C h^4 \|F_h e_z\|+ Ch^4 \|F_h e_p\| \\
&\le \alpha (C^2h^8+\frac{1}{4} \|F_h e_z\|^2)+(C^2h^8+\frac{1}{4} \|F_h e_p\|^2)\\
&= (\alpha+1)C^2h^8+\frac{\alpha}{4} \|F_h e_z\|^2+\frac{1}{4} \|F_h e_p\|^2.
\end{aligned} 
\ee
By combining the like terms, we obtain
\eq 
\begin{aligned} 
\alpha \|F_h e_z\|^2+ \|F_h e_p\|^2\le \frac{4}{3}(\alpha+1)C^2 h^8.
\end{aligned} 
\ee
Again, due to the discrete Sobolev embedding inequalities (see Lemma \ref{Lem1})
$$\|e_z\|_\infty\le C \|F_h e_z\|\quad\mbox{and}\quad \|e_p\|_\infty\le C \|F_h e_p\|,$$ 
we get
\eq  
\begin{aligned} 
  \|e_z\|_\infty \le C h^4,\ \|e_p\|_\infty \le C h^4,\ \mbox{and}\ \|e_u\|_\infty=\frac{1}{\alpha}\|e_p\|_\infty \le C h^4.
\end{aligned} 
\ee
This concludes the fourth-order accuracy in the infinity norm of the finite difference scheme (\ref{KKT-h-4}) . 
\end{proof}
We remark that the above shown critical identity $F_h R_h=R_h F_h$ was not previously known in the literature,
which can also be verified pointwisely though Taylor series expansions.
{Following the analogue arguments, we can easily derive the following useful identities to be used below:}
\begin{align}\label{FR2}
\Delta_h(I_h-\gamma\Delta_h)=(I_h-\gamma\Delta_h)\Delta_h,\quad F_h(I_h-\gamma\Delta_h)=(I_h-\gamma\Delta_h)F_h,\quad
R_h (I_h-\gamma\Delta_h)=(I_h-\gamma\Delta_h)R_h.
\end{align}
\section{Proof of Theorem \ref{Thm-DO-4th}: the fourth-order accuracy of the scheme (\ref{h-KKT-Trap-4}).} 
\label{App-DO-4th}
\begin{proof}
Notice the scheme (\ref{KKT-h-4}) can be written into (after eliminating $p_h$) 
\eq \label{KKT-h-4-A} 
 \left\{
\begin{aligned} 
F_h z_h-R_h u_h&= R_h f_h,\\
\alpha F_h u_h+R_h z_h&= R_h g_h,
\end{aligned} \right.
\ee
which also has a fourth-order accuracy based on the previous proof.
On the other hand, recall the scheme (\ref{h-KKT-Trap-4}) 
\eq \label{h-KKT-Trap-4-recall} 
 \mbox{(DO-4-Trap)}\quad \left\{
\begin{aligned} 
F_h z_h-R_h u_h&= R_h f_h,\\
F_h p_h+ z_h&= g_h,\\
\alpha u_h-R_h p_h &=0,
\end{aligned} \right.
\ee
which, by left multiplying the second equation by $R_h$ and the third equation by $F_h$,  can be transformed into
 \eq \label{h-KKT-Trap-4-A} 
 \left\{
\begin{aligned} 
F_h z_h-R_h u_h&= R_h f_h,\\
R_h F_h p_h+ R_h z_h&= R_h g_h,\\
\alpha F_h u_h-F_h R_h p_h &=0.
\end{aligned} \right.
\ee
Since the matrices $F_h$ and $R_h$ are commutative, there holds (by the third equation)
\[
 R_h F_h p_h =F_h R_h p_h=\alpha F_h u_h.
\] 
Hence, the scheme (\ref{h-KKT-Trap-4-A}) can be further written as (after eliminating $p_h$) 
 \eq \label{h-KKT-Trap-4-B} 
 \left\{
\begin{aligned} 
F_h z_h-R_h u_h&= R_h f_h,\\
\alpha F_h u_h+ R_h z_h&= R_h g_h,
\end{aligned} \right.
\ee
which is exactly the same as the scheme (\ref{KKT-h-4-A}).
Therefore, the reduced scheme (\ref{h-KKT-Trap-4-B}) also has a fourth-order accuracy
in view of the scheme  (\ref{KKT-h-4-A}). At last, we point out that the approximation $p_h$ obtained
from the third equation $\alpha u_h- R_h p_h=0$ has only a second-order accuracy
due to the $O(h^2)$ truncation error arising from $R_h p_h=p_h+O(h^2)$, 
compared with the third equation in scheme (\ref{KKT-h-4}).
\end{proof}
{
\section{Proof of Theorem \ref{Thm-DO-2th-Reg}: the second-order accuracy of the scheme (\ref{h-KKT-Trap-reg}).} 
\label{App-DO-2th-Reg}

Notice the OD scheme (\ref{KKT-h}) can be written into (after eliminating $p_h$) 
\eq \label{KKT-h-A} 
  \left\{
\begin{aligned} 
-\Delta_h z_h-u_h&= f_h,\\
-\alpha \Delta_h u_h+z_h&= g_h,
\end{aligned} \right.
\ee
which, by Theorem \ref{Thm-KKT-h}, has a second-order accuracy in approximations $z_h$ and $u_h$.

Similarly, upon eliminating $p_h$ in the scheme (\ref{h-KKT-Trap-reg}) we arrive at
\eq \label{h-KKT-Trap-reg-A} 
   \left\{
\begin{aligned} 
-\Delta_h z_h-u_h&=f_h,\\
-\alpha\Delta_h (I_h-\gamma \Delta_h) u_h+(I_h-\gamma \Delta_h) z_h&=(I_h-\gamma \Delta_h) g_h,\\
\end{aligned} \right.
\ee
which leads to (using the fact $\Delta_h(I_h-\gamma\Delta_h)=(I_h-\gamma\Delta_h)\Delta_h$)
\eq  
  \left\{
\begin{aligned} 
-\Delta_h z_h-u_h&=f_h,\\
-\alpha(I_h-\gamma \Delta_h)  \Delta_h u_h+(I_h-\gamma \Delta_h) z_h&=(I_h-\gamma \Delta_h) g_h.\\
\end{aligned} \right.
\ee
Multiplying the second equation by $(I_h-\gamma \Delta_h)^{-1}$ would gives the same system
as (\ref{KKT-h-A}), which hence proves the second-order accuracy of approximations $z_h$ and $u_h$ in the regularized scheme (\ref{h-KKT-Trap-reg}).
But, the discrete adjoint state $p_h=\alpha(I_h-\gamma \Delta_h)u_h$ 
in general will not converge to the exact adjoint state $p=\alpha u$.

\section{Proof of Theorem \ref{Thm-DO-4th-Reg}: the fourth-order accuracy of the scheme (\ref{h-KKT-Trap-4-reg}).} 
\label{App-DO-4th-Reg}

Notice the scheme (\ref{KKT-h-4}) can be written into (after eliminating $p_h$) 
\eq \label{KKT-h-4-B} 
 \left\{
\begin{aligned} 
F_h z_h-R_h u_h&= R_h f_h,\\
\alpha F_h u_h+R_h z_h&= R_h g_h,
\end{aligned} \right.
\ee
which has a fourth-order accuracy in approximations $z_h$ and $u_h$ as shown in Theorem \ref{Thm-KKT-h-4}.

Upon eliminating $p_h$ in the scheme (\ref{h-KKT-Trap-4-reg}) and making use of the identity
$R_hF_h=F_hR_h$,
we arrive at
\eq \label{h-KKT-Trap-4-reg-B} 
  \left\{
\begin{aligned} 
F_h z_h-R_h u_h&= R_h f_h,\\
\alpha F_h (I_h-\gamma \Delta_h)u_h+ R_h(I_h-\gamma \Delta_h)z_h&= R_h(I_h-\gamma \Delta_h)g_h,\\
\end{aligned} \right.
\ee
which further leads to (using the facts $F_h(I_h-\gamma\Delta_h)=(I_h-\gamma\Delta_h)F_h$ and $R_h (I_h-\gamma\Delta_h)=(I_h-\gamma\Delta_h)R_h$)
\eq  
  \left\{
\begin{aligned} 
F_h z_h-R_h u_h&= R_h f_h,\\
\alpha (I_h-\gamma \Delta_h)F_h u_h+ (I_h-\gamma \Delta_h)R_hz_h&= (I_h-\gamma \Delta_h)R_hg_h,\\
\end{aligned} \right.
\ee
Multiplying the second equation by $(I_h-\gamma \Delta_h)^{-1}$ would gives the same system
as (\ref{KKT-h-4-B}), which hence proves the fourth-order accuracy of approximations $z_h$ and $u_h$ in the regularized scheme (\ref{h-KKT-Trap-4-reg}).
Again, the recovered discrete adjoint state $p_h=\alpha R_h^{-1}(I_h-\gamma \Delta_h)u_h$ 
in general will not converge to the exact adjoint state $p=\alpha u$.
}
\end{appendices}

\section*{References}
 
\bibliographystyle{siam} %elsarticle-num
\bibliography{reference}

\end{document}